\newtheorem{Pa}{Paper}[section]
\newtheorem{theorem}[Pa]{{\bf Theorem}}
\newtheorem{lem}[Pa]{{\bf Lemma}}
\newtheorem{corollary}[Pa]{{\bf Corollary}}
\newtheorem{prop}[Pa]{{\bf Proposition}}
\newtheorem{definition}[Pa]{{\bf Definition}}
\newtheorem{Ex}[Pa]{{\bf Example}}
\def\R{\mathbb R}
\def\(s){\mathscr S(\R^2)}
\newcommand{\w}{\omega}
\title[Stochastic integration and an extension of $S$-transform]
{Stochastic integration for
a wide class of Gaussian stationary increment processes
using an extension of the $S$-transform}
\author{Daniel Alpay}
\address{(DA) Department of Mathematics
\newline
Ben Gurion University of the Negev \newline P.O.B. 653,
\newline
Be'er Sheva 84105, \newline ISRAEL}
\email{dany@math.bgu.ac.il}
\author{Alon Kipnis}
\address{(AK) Department of Mathematics
\newline
Ben Gurion University of the Negev \newline P.O.B. 653,
\newline
Be'er Sheva 84105, \newline ISRAEL}
\email{kipnisal@bgu.ac.il}
\keywords{stochastic integral, white noise space, fractional Brownian motion}
\subjclass{Primary: 60H40,60H05. Secondary:}
\thanks{D. Alpay thanks the
Earl Katz family for endowing the chair which supported his
research}
\begin{document}
\maketitle
\begin{abstract}
Given a Gaussian stationary increment processes with spectral density, we show that a
Wick-It\^{o} integral with respect to this process can be naturally obtained
using Hida's white noise space theory. We use the Bochner-Minlos theorem to associate a probability
space to the process, and define the counterpart of the $S$-transform in this space. We then use
this transform to define the stochastic integral and prove an associated It\^{o} formula.
\end{abstract}
\section{Introduction}
\setcounter{equation}{0}
Stochastic integration with respect to general Gaussian processes can be found in \cite[Ch.7]{MR99f:60082}. 
An extension of It\^{o} formula for such integrals has been derived  
by Nualart and Taqqu \cite{MR2220074,MR2456333} and recently in \cite{aal3} in the setting of white noise analysis. \\

In this paper we extend the $S$-transform approach to develop stochastic calculus and 
derive additional results for the family of centered Gaussian processes with covariance function of the form
\[
K_m(t,s)=\int_{\mathbb R}\frac{e^{i\xi t}-1}{\xi}\frac{e^{-i\xi s}-1}{\xi}m(\xi)d\xi
\]
where $m$ is a positive measurable even function subject to
\[
\int_{\mathbb R}\frac{m(u)}{\xi^2+1}d\xi<\infty.
\]

Note that $K_m(t,s)$ can also be written as
\[
K_m(t,s)=r(t)+r(s)-r(t-s),
\]
where
\[
r(t)=\int_{\mathbb R}\frac{1-\cos(t\xi)}{\xi^2}m(\xi)d\xi.
\]
This family includes in particular the fractional Brownian motion,
which corresponds (up to a multiplicative constant)
to $m(\xi)=|\xi|^{1-2H}$, where $H\in(0,1)$. We note that complex-valued functions of the form
\[
K(t,s)=r(t)+\overline{r(s)}-r(t-s)-r(0),
\]
where $r$ is a continuous function, have been studied in particular by von Neumann,
Schoenberg and Krein. Such a function  is positive definite if and only if $r$ can be written in
the form
\[
r(t)=r_0+i\gamma t+\int_{\mathbb R}\left\{e^{i \xi t}-1-\frac{i\xi t}{\xi^2+1}\right\}\frac{d\sigma(\xi)}{\xi^2},
\]
where $\sigma$ is an increasing right continuous function
subject to $\int_{\mathbb R}\frac{d\sigma(\xi)}{\xi^2+1}<\infty$. See \cite{MR0004644},
\cite{MR0012176}, and see \cite{aal2} for more information on these kernels.\\

As in \cite{aal2}, our starting point is the (in general unbounded) operator $T_m$ on the Lebesgue space of complex-valued
functions $\mathbf L_2(\mathbb R)$ defined by
\begin{equation} \label{def:Tm}
\widehat{T_mf}(\xi)=\sqrt{m}(\xi)\widehat{f}(\xi),
\end{equation}

with domain
\[
\mathcal D(T_m)=\left\{ f\in\mathbf L_2(\mathbb R)\, ; \int_{\mathbb R}m(\xi)|\widehat{f}(\xi)|^2d\xi<\infty\right\},
\]
where $\widehat{f}(\xi)=\frac{1}{\sqrt{2\pi}}
\int_{\mathbb R}e^{-i\xi t}f(t)dt$ denotes the Fourier transform.
Clearly, the Schwartz space $\mathscr S$ of smooth rapidly decreasing
functions belong to the domain of $T_m$. The indicator functions
\[
\mathbf 1_t=\begin{cases}\mathbf 1_{[0,t]},\quad t\ge 0,\\
                        \mathbf 1_{[t,0]}\quad t\le 0,\end{cases}
\]
also belong to $\mathcal D(T_m)$. In \cite{aal2}, and with some restrictions on $m$,
a centered Gaussian process $B_m$ with covariance function $K_m(t,s)=\left( T_m\mathbf 1_t,
T_m\mathbf 1_s\right)$ was constructed in Hida's white noise space.
In the present paper  we chose a different path. We build from $T_m$ the characteristic functional
\begin{equation}
\label{eq:tm}
C_m(s)=e^{-\frac{\|T_ms\|_{\mathbf L_2(\mathbb R)}^2}{2}}.
\end{equation}
It has been proved in \cite{MR2793121} that $C_m$ is continuous from $\mathscr S$ into $\mathbf L_2(\mathbb R)$.
Restricting $C_m$ to real-valued functions and using the Bochner-Minlos theorem, we obtain an analog of the white noise space in which the process $B_m$ is built in a natural way.
Stochastic calculus with respect to this process is then developed using an $S$-transform approach. \\

We note that when $m(\xi)=|\xi|^{1-2H}$, and $H\in(\frac{1}{2}, 1)$,
the operator $T_m$ reduces, up to a multiplicative constant,
to the operator $M_H$ defined in \cite{eh} and in
\cite{bosw}. The set $L_\phi^2$ presented in
\cite[equation (3)]{MR1741154}, is closely related to the domain of $T_m$, and the functional $C_m$ was used with the
Bochner-Minlos theorem in \cite[(3.5), p. 49]{MR2387368}.
In view of this, our work generalized the stochastic calculus for fractional
Brownian motion presented in these works to the aforementioned family of Gaussian processes.\\

Note moreover that the function $\phi$ from the last references
defines the kernel associated to the operator $T_m^*T_m$ via Schwartz' kernel theorem,
with $m=M_H$. In the general case, the kernel associated to the operator $T_m^*T_m$
is not a function. This last remark is the source for some of the difficulties arises
in extending Wick-It\^{o} integration for fractional Brownian motion such as the distinction between
the cases $H< \frac{1}{2}$, $H>\frac{1}{2}$ and $H=\frac{1}{2}$. An advantage
of our approach is that such issues do not arise.\\

There are two main ideas in this paper. The first is the construction of a probability Gaussian space in which a stationary increment process with spectral density $m$ is naturally defined. This result, being a concrete example of Kolmogorov's extension theorem on the existence of a Gaussian process with a given spectral density, is interesting in its own right.
The second main result deals with developing stochastic integration with respect to the fundamental process in this space. We take an approach based on the analog of the $S$-transform in our setting, and show that this stochastic integral coincides with the one already defined in \cite{aal3} but in the framework of Hida's white noise space.\\

Possible extensions and applications of the ideas presented here are 
in the field of stochastic control by considering systems driven by Gaussian noise of an arbitrary spectrum. The case of fractional noise was investigated for example in \cite{hu2005stochastic}, \cite{KlepLeb2002} and \cite{duncan2009control}. We argue that extensions of some of these works to systems driven by noises of a more general spectrum is straightforward in view of our present work. \\

The paper consists of five sections besides the introduction. In Section \ref{WAnoise} we
construct an analog of Hida's white noise space using the characteristic function $C_m$. In
Section \ref{sec:3} the associated fundamental process $B_m$ is being defined and studied.
The analog of the $S$-transform is defined and studied in Section \ref{sec4}. In Section \ref{sec:stoc_int}
we define a Wick-It\^{o} type stochastic integral with respect to $B_m$, and prove
an associated It\^o formula. In the last section we relate the present stochastic integral
with previous extensions of the It\^{o} integral for non semi-martingales.

\section{The $m$ Noise Space}
\label{WAnoise}
\setcounter{equation}{0}
We set $\mathscr S_{\mathbb R}$ to be the space of real-valued Schwartz functions,
and $\Omega=\mathscr S_{\mathbb R}^\prime$. We denote by $\mathcal B$ the
associated Borel sigma algebra. Throughout this paper, we denote by $\langle \cdot, \cdot \rangle$ the duality between $\mathscr S_{\mathbb R}^\prime$ and $\mathscr S_{\mathbb R}$, and by $\left(\cdot,\cdot\right)$ the inner product in $\mathbf L_2(\mathbb R)$. In case there is no danger of confusion, the $\mathbf L_2(\mathbb R)$ norm will be denoted as $\| \cdot \|$.

\begin{theorem} \label{th:probabilty_measure}
There exists a unique probability measure $\mu_m$ on
$(\Omega,\mathcal B)$ such that
\[
e^{-\frac{\|T_m s\|^2}{2}}=\int_{\Omega}e^{i\langle \omega,s\rangle}d\mu_m(\omega),
\quad s\in\mathscr S_{\mathbb R},
\]
\end{theorem}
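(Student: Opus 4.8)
The plan is to realize $C_m$ as the characteristic functional of a probability measure on the dual $\Omega=\mathscr S_{\mathbb R}'$ by verifying the hypotheses of the Bochner--Minlos theorem. Recall that this theorem asserts that a functional $C:\mathscr S_{\mathbb R}\to\mathbb C$ is of the form $C(s)=\int_\Omega e^{i\langle\omega,s\rangle}\,d\mu(\omega)$ for a \emph{unique} probability measure $\mu$ precisely when (i) $C(0)=1$, (ii) $C$ is continuous on $\mathscr S_{\mathbb R}$, and (iii) $C$ is positive definite, meaning $\sum_{j,k=1}^n z_j\overline{z_k}\,C(s_j-s_k)\ge 0$ for all finite families $s_1,\dots,s_n\in\mathscr S_{\mathbb R}$ and $z_1,\dots,z_n\in\mathbb C$. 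The normalization (i) is immediate since $T_m 0=0$, and the continuity (ii) is exactly the statement recalled above from \cite{MR2793121}; it can be seen directly by writing $\|T_ms\|^2=\int_{\mathbb R}m(\xi)|\widehat s(\xi)|^2\,d\xi$ and estimating
\[
\|T_ms\|^2\le\Big(\sup_{\xi\in\mathbb R}(1+\xi^2)|\widehat s(\xi)|^2\Big)\int_{\mathbb R}\frac{m(\xi)}{1+\xi^2}\,d\xi,
\]
so that the integrability hypothesis on $m$ together with the continuity of the Fourier transform on $\mathscr S$ bounds $\|T_m s\|^2$ by a continuous Schwartz seminorm.

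The heart of the argument is the positive definiteness (iii). Here I would first record that, because $m$ is even and $s$ is real-valued (so $\widehat s(-\xi)=\overline{\widehat s(\xi)}$), the sesquilinear form $B(s,t)=(T_ms,T_mt)=\int_{\mathbb R}m(\xi)\widehat s(\xi)\overline{\widehat t(\xi)}\,d\xi$ restricts to a real, symmetric, positive semidefinite bilinear form on $\mathscr S_{\mathbb R}$. Consequently $\|T_m(s-t)\|^2=B(s,s)-2B(s,t)+B(t,t)$, and
\[
C_m(s-t)=e^{-\frac12 B(s,s)}\,e^{B(s,t)}\,e^{-\frac12 B(t,t)}.
\]
Setting $w_j=z_j e^{-\frac12 B(s_j,s_j)}$, the quadratic sum in (iii) becomes $\sum_{j,k}w_j\overline{w_k}\,e^{B(s_j,s_k)}$. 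Since $\bigl(B(s_j,s_k)\bigr)_{j,k}$ is a Gram matrix, it is positive semidefinite; by the Schur product theorem each Hadamard power $\bigl(B(s_j,s_k)^n\bigr)_{j,k}$ is positive semidefinite, and hence so is $\bigl(e^{B(s_j,s_k)}\bigr)_{j,k}=\sum_{n\ge0}\frac1{n!}\bigl(B(s_j,s_k)^n\bigr)_{j,k}$. Therefore the sum is nonnegative, establishing (iii).

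With (i)--(iii) in hand, the Bochner--Minlos theorem yields a unique probability measure $\mu_m$ on $(\Omega,\mathcal B)$ whose characteristic functional is $C_m$, which is exactly the assertion. I expect the only genuinely delicate point to be the continuity step (ii): one must control the possibly singular weight $m$ by a Schwartz seminorm, and it is precisely the integrability condition $\int_{\mathbb R}m(\xi)/(1+\xi^2)\,d\xi<\infty$ that makes this possible (and that also underlies why $\mathbf 1_t\in\mathcal D(T_m)$). The positive-definiteness computation, while it is the conceptual core, is then routine given the Gaussian form of $C_m$.
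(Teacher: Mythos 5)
Your proof is correct and follows the same overall strategy as the paper: both verify the hypotheses of the Bochner--Minlos theorem, and both obtain positive definiteness from the same Gaussian factorization $C_m(s_1-s_2)=e^{-\frac12\|T_ms_1\|^2}\,e^{(T_ms_1,T_ms_2)}\,e^{-\frac12\|T_ms_2\|^2}$. Two differences in execution are worth noting. First, for the continuity step the paper reproduces the convolution-based estimate of \cite{MR2793121}, bounding $\|T_ms\|^2\le K\bigl((\int_{\mathbb R}|s|\,d\xi)^2+(\int_{\mathbb R}|s'|\,d\xi)^2\bigr)$ by expressing the integrand in terms of $s\star s^\sharp$ and $s'\star(s^\sharp)'$; you instead use the simpler bound $\|T_ms\|^2\le\sup_\xi\bigl[(1+\xi^2)|\widehat s(\xi)|^2\bigr]\int_{\mathbb R} m(\xi)(1+\xi^2)^{-1}\,d\xi$, which is precisely the estimate the paper itself invokes later (Section 3) as a sufficient condition for membership in $\mathcal D(T_m)$. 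Both bounds are by continuous Schwartz seminorms, so both yield continuity of $T_m:\mathscr S\to\mathbf L_2(\mathbb R)$ and hence of $C_m$; yours is the more elementary route. Second, where the paper merely displays the factorization and asserts positive definiteness, you complete that argument: you check that $(T_ms,T_mt)$ is real and symmetric on real-valued functions (using that $m$ is even), absorb the diagonal Gaussian factors into the coefficients, and invoke the Schur product theorem to conclude that the entrywise exponential of the Gram matrix $\bigl((T_ms_j,T_ms_k)\bigr)_{j,k}$ is positive semidefinite. So your write-up is, if anything, more detailed than the paper's at its conceptual core, while reaching the same conclusion by the same Bochner--Minlos mechanism, including uniqueness.
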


{\bf Proof:} The function $C_m(s)$ is positive definite on
$\mathscr S_{\mathbb R}$ since
\[
C_m(s_1-s_2) = \exp\left\{-\frac{1}{2}
\|T_ms_1\|^2\right\}\times
\exp\left\{\left( T_ms_1,T_ms_2\right)
\right\} \times
\exp\left\{-\frac{1}{2} \|T_ms_2\|^2\right\}.
\]
Moreover the operator $T_m$ is continuous from
$\mathscr S$ (and hence from $\mathscr S_{\mathbb R}$)
into $\mathbf L_2(\mathbb R)$. This was proved
in  \cite{MR2793121}, and we repeat the argument for completeness. As in
\cite{MR2793121} we set $K=\int_{\mathbb R}\frac{m(u)}{1+u^2}du$ and
$s^\sharp(u)=\overline{s(-u)}$.  We have for $s\in\mathscr S$:
\[
\begin{split}
\|T_ms\|^2&=\int_{\mathbb R}|\widehat{s}(u)|^2m(u)du\\
&=\int_{\mathbb R}|(1+u^2)\widehat{s}(u)|^2\frac{m(u)}{1+u^2}du\\
&\le K\left(\int_{\mathbb R}|s\star s^\sharp|(\xi) d\xi+
\int_{\mathbb
R}|s^\prime\star(s^\sharp)^\prime|(\xi) d\xi\right)\\
&\le K\left(\left(\int_{\mathbb R}|s(\xi)|d\xi\right)^2+
\left(\int_{\mathbb R}|s^\prime(\xi)|d\xi\right)^2\right),
\end{split}
\]
where we have denoted convolution by $\star$.
Therefore $C_m$ is a continuous map from $\mathscr S_{\mathbb R}$
into $\mathbb R$, and the existence of $\mu_m$ follows
from the Bochner-Minlos theorem.\qed\mbox{}\\

The triplet $\left(\Omega,\mathcal B,\mu_m\right)$ will be used as our probability space.

\begin{prop}
Let $s\in\mathscr S_{\mathbb R}$. Then:
\begin{equation}
\label{eq:iso}
\mathbb E[\langle \omega, s\rangle^2]=\|T_ms\|^2_{\mathbf{L_2}(\mathbb R)}.
\end{equation}
\end{prop}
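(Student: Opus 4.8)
The plan is to reduce the identity to the classical fact that the second moment of a real random variable can be recovered by differentiating its characteristic function twice at the origin. Fix $s\in\mathscr S_{\mathbb R}$ and regard $X(\omega)=\langle\omega,s\rangle$ as a real-valued random variable on $(\Omega,\mathcal B,\mu_m)$. The first step is to compute its (scalar) characteristic function: for $t\in\mathbb R$, substitute $ts$ in place of $s$ in Theorem \ref{th:probabilty_measure}. Since the duality pairing is linear in its second argument, $\langle\omega,ts\rangle=t\langle\omega,s\rangle$, while the homogeneity of the norm gives $\|T_m(ts)\|^2=t^2\|T_ms\|^2$. Hence
\[
\varphi_X(t):=\mathbb E\left[e^{itX}\right]=\int_\Omega e^{it\langle\omega,s\rangle}\,d\mu_m(\omega)=e^{-\frac{t^2\|T_ms\|^2}{2}}.
\]

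The second step is to extract the second moment from $\varphi_X$. The function on the right-hand side is smooth (indeed entire) in $t$, so $\varphi_X$ is twice differentiable at the origin; by the classical theorem relating differentiability of a characteristic function at $0$ to the existence of moments, this already forces $\mathbb E[X^2]<\infty$ together with $\mathbb E[X^2]=-\varphi_X''(0)$. A direct computation of $\varphi_X''$ from the exponential formula gives $\varphi_X''(0)=-\|T_ms\|^2$, which is exactly \eqref{eq:iso}. Equivalently, one may observe that $\varphi_X$ is precisely the characteristic function of a centered Gaussian law with variance $\|T_ms\|^2$, so that by the uniqueness theorem for characteristic functions $X\sim\mathcal N\!\left(0,\|T_ms\|^2\right)$, and the claim follows from the known moments of the Gaussian distribution.

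There is no serious obstacle here; the only point requiring care is the passage from the characteristic functional on $\mathscr S_{\mathbb R}$ to a scalar moment, and I would handle this cleanly by working with the one-parameter family $t\mapsto ts$ rather than attempting to differentiate under the integral sign in the infinite-dimensional variable $s$ directly. I would avoid the latter route precisely because justifying differentiation under the integral would require an a priori integrability bound on $\langle\omega,s\rangle^2$, whereas the scalar characteristic-function argument delivers both the finiteness of the second moment and its value simultaneously.
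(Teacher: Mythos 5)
Your proof is correct, and it rests on the same object as the paper's own proof: the identity of Theorem \ref{th:probabilty_measure}, read along the ray $t\mapsto ts$, i.e.\ the scalar characteristic function $\varphi_X(t)=\mathbb E\left[e^{it\langle\omega,s\rangle}\right]=e^{-t^2\|T_ms\|^2/2}$. Where you differ is in how the moment is extracted. The paper simply ``expands both sides of \eqref{eq:1} in power series'' and matches coefficients; as written this is a formal argument, since expanding $\int_\Omega e^{i\langle\omega,s\rangle}\,d\mu_m(\omega)$ termwise presupposes both that $\mathbb E[\langle\omega,s\rangle^2]$ (and the higher moments) are finite and that summation and integration may be interchanged --- points the paper does not address. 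Your route through the classical theorem (twice differentiability of $\varphi_X$ at $0$ forces $\mathbb E[X^2]<\infty$ and $\mathbb E[X^2]=-\varphi_X''(0)$), or equivalently through the uniqueness theorem identifying the law of $\langle\omega,s\rangle$ as $\mathcal N\left(0,\|T_ms\|^2\right)$, supplies exactly the justification that the power-series matching glosses over, and it yields the finiteness of the second moment as part of the conclusion rather than as a tacit hypothesis. In that sense your proposal is a rigorous rendering of the paper's sketch rather than a different method; the Gaussian-identification variant also meshes naturally with what the paper does later in \eqref{Gaussian}, where the same substitution $s\mapsto\lambda\sum_i a_if_i$ into the characteristic functional is used to show that the whole family $\langle\omega,f\rangle$ is Gaussian.
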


{\bf Proof:} We have

\begin{equation} \label{eq:1}
e^{-\frac{1}{2} \|T_ms\|^2}
{=} \int_{\Omega}
{e^{i\langle\w,s\rangle}d\mu_m(\w)}.
\end{equation}
Expanding both sides of (\ref{eq:1}) in power series we obtain

\begin{equation}\label{zeroMean}
\mathbb E\left[\langle\omega,s\rangle \right]= \int_{\Omega}
{\langle\w,s\rangle d\mu_m(\w)}=0.
\end{equation}
and
\begin{equation}
\begin{gathered}
\mathbb E\left[\langle\omega,s\rangle^{2}\right] {=} \int_{\mathscr S'}
\langle\w,s\rangle^2 d\mu_m(\w) = \langle T_ms,T_ms\rangle_{\mathbf{L_2}(\mathbb R)}.
\end{gathered}
\label{eq:isometry}
\end{equation}

We now want to extend the isometry \eqref{eq:iso} when $s$ is replaced by $f$
in the domain of $T_m$. This involves two separate steps: First, an
approximation procedure, and next complexification. The following two
propositions deal with the approximation.

\begin{prop} \label{prop:denseInRange}
The space $T_m(\mathscr S)$ is dense in the range of $T_m$.
\end{prop}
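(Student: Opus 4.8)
The plan is to push the whole question through the Fourier transform, where $T_m$ becomes multiplication by $\sqrt m$, and then to recognize the claim as a density statement in a weighted $\mathbf L_2$ space. Since the Fourier transform is a unitary map of $\mathbf L_2(\mathbb R)$ onto itself carrying $\mathscr S$ bijectively onto $\mathscr S$, and since by definition $\widehat{T_m g}=\sqrt m\,\widehat g$, for any $f\in\mathcal D(T_m)$ and any $s\in\mathscr S$ we have
\[
\|T_mf-T_ms\|^2=\int_{\mathbb R}m(\xi)\,|\widehat f(\xi)-\widehat s(\xi)|^2\,d\xi=\|\widehat f-\widehat s\|_{\mathbf L_2(m\,d\xi)}^2 .
\]
Recalling that $f\in\mathcal D(T_m)$ is exactly the condition $\widehat f\in\mathbf L_2(m\,d\xi)$, and that $\{\widehat s:\,s\in\mathscr S\}=\mathscr S$, it suffices to prove that $\mathscr S$ is dense in the weighted space $\mathbf L_2(m\,d\xi)$. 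Indeed, approximating an arbitrary $\widehat f$ there produces $s\in\mathscr S$ with $T_ms$ arbitrarily close to $T_mf$ in $\mathbf L_2(\mathbb R)$, which is precisely the asserted density of $T_m(\mathscr S)$ in the range of $T_m$.

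Next I would record that $m\,d\xi$ is a Radon measure. The standing hypothesis $\int_{\mathbb R}\tfrac{m(\xi)}{1+\xi^2}\,d\xi<\infty$ forces $m$ to be locally integrable: for any bounded interval $[a,b]$,
\[
\int_a^b m(\xi)\,d\xi=\int_a^b(1+\xi^2)\,\frac{m(\xi)}{1+\xi^2}\,d\xi\le\Big(\sup_{\xi\in[a,b]}(1+\xi^2)\Big)\int_{\mathbb R}\frac{m(\xi)}{1+\xi^2}\,d\xi<\infty.
\]
Hence $m\,d\xi$ is a locally finite, and therefore Radon, Borel measure on $\mathbb R$.

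To finish I would approximate a fixed $g=\widehat f\in\mathbf L_2(m\,d\xi)$ in two moves. First truncate the support, setting $g_R=g\,\mathbf 1_{[-R,R]}$; since $g\in\mathbf L_2(m\,d\xi)$, the tail $\|g-g_R\|_{\mathbf L_2(m\,d\xi)}^2=\int_{|\xi|>R}m|g|^2\,d\xi$ tends to $0$ as $R\to\infty$. Then, using the standard fact that $C_c(\mathbb R)$ is dense in $\mathbf L_2(\nu)$ for every Radon measure $\nu$, and mollifying to pass from $C_c(\mathbb R)$ to $C_c^\infty(\mathbb R)\subset\mathscr S$ while keeping supports inside a fixed compact $K$, I would use that uniform convergence together with the finite mass $\int_K m\,d\xi<\infty$ yields convergence in $\mathbf L_2(m\,d\xi)$. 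Combining the two moves gives an element of $\mathscr S$ approximating $g$ in the weighted norm, completing the argument. The one delicate point, and the step I expect to be the main obstacle, is exactly this last interplay with the weight: $m$ may be unbounded at infinity and singular at isolated frequencies, so the tail must be controlled by membership of $g$ in $\mathbf L_2(m\,d\xi)$, while the local approximation relies on the local integrability of $m$ established above to convert uniform (sup-norm) estimates into $\mathbf L_2(m\,d\xi)$ estimates.
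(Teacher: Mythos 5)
Your proof is correct, and it takes a genuinely different route from the paper's. The paper argues by orthogonality: if $T_mf$ is orthogonal to $T_m(\mathscr S)$, testing against the Hermite functions $h_n$ (eigenvectors of the Fourier transform) gives $\left(m\widehat f,h_n\right)=0$ for all $n$, whence $m\widehat f\equiv 0$ and so $T_mf=0$. You instead conjugate by the Fourier transform, identify $\|T_mf-T_ms\|$ with the distance from $\widehat f$ to $\widehat s$ in the weighted space $\mathbf L_2(m\,d\xi)$, and prove directly that $C_c^\infty(\mathbb R)\subset\mathscr S$ is dense there, via truncation (controlled by membership of $\widehat f$ in $\mathbf L_2(m\,d\xi)$), density of $C_c(\mathbb R)$ in $\mathbf L_2$ of a Radon measure, and mollification; the last step is legitimized by the local integrability of $m$, which you correctly extract from $\int_{\mathbb R}m(\xi)(1+\xi^2)^{-1}d\xi<\infty$. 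What each approach buys: the paper's is much shorter, essentially two lines once the Hermite basis is invoked. Yours is constructive, proves the stronger statement that $\mathscr S$ is dense in all of $\mathbf L_2(m\,d\xi)$ (not merely that it approximates Fourier transforms of elements of $\mathcal D(T_m)$), and is logically tighter at two points where the paper's argument is delicate: first, the criterion ``every element of the range orthogonal to $T_m(\mathscr S)$ vanishes'' is, strictly speaking, weaker than density when the range of $T_m$ is not closed, since the component of $T_mf$ orthogonal to $\overline{T_m(\mathscr S)}$ need not itself lie in the range; second, the paper's conclusion ``$m\widehat f\equiv 0$ since the $h_n$ form a basis of $\mathbf L_2(\mathbb R)$'' implicitly treats $m\widehat f$ as an element of $\mathbf L_2(\mathbb R)$, whereas only $\sqrt m\,\widehat f$ is known to be square integrable. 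Your direct approximation bypasses both subtleties.
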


{\bf Proof:} Let $f\in{\mathcal D}(T_m)$ be such that $T_mf$ is orthogonal to $T_m(\mathscr S)$,
and let $h_n, n=1,2,\ldots $ denote the Hermite functions. Since these are
eigenvectors of the Fourier transform we have:
\[
\begin{split}
0&=
\left( T_mf,T_mh_n\right)\\
&=\left( \sqrt{m}\widehat{f},\sqrt{m}\widehat{h_n}\right)\\
&=c_n\left(\sqrt{m}\widehat{f},\sqrt{m}h_n\right)\\
&=c_n\left( m\widehat{f},h_n\right),
\end{split}
\]
where $c_n\in\mathbb R$ depends only on $n$.
So $m\widehat{f}\equiv 0$ since the $h_n$ form a basis
of $\mathbf L_2(\mathbb R)$.
Thus $\sqrt{m}\widehat{f}\equiv 0$ and so $T_mf=0$.
\mbox{}\qed\mbox{}\\

\begin{theorem}
The isometry \eqref{eq:iso} extends to $T_mf$ where $f$ is real-valued and in
the domain of $T_m$.
\end{theorem}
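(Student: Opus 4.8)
The plan is to approximate $f$ in the graph seminorm $\|T_m\cdot\|$ by \emph{real-valued} Schwartz functions and then transport the isometry \eqref{eq:iso} to the limit inside $\mathbf L_2(\Omega,\mu_m)$. First I would produce an approximating sequence: since $T_mf$ lies in the range of $T_m$, Proposition \ref{prop:denseInRange} provides functions $s_n\in\mathscr S$ with $T_ms_n\to T_mf$ in $\mathbf L_2(\mathbb R)$. These $s_n$ are a priori complex, whereas both the isometry and the pairing $\langle\omega,\cdot\rangle$ are available only for real-valued test functions. The key preliminary step is therefore to replace $s_n$ by its real part $a_n=\operatorname{Re}s_n\in\mathscr S_{\mathbb R}$ and to verify that $T_ma_n\to T_mf$ is preserved. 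On the Fourier side one computes $\widehat{T_ma_n}(\xi)=\tfrac12\bigl(\widehat{T_ms_n}(\xi)+\overline{\widehat{T_ms_n}(-\xi)}\bigr)$, using that $\sqrt m$ is real and even. Since $f$ is real-valued, $\widehat{T_mf}$ obeys the Hermitian symmetry $\overline{\widehat{T_mf}(-\xi)}=\widehat{T_mf}(\xi)$, so that both terms on the right converge to $\widehat{T_mf}(\xi)$ in $\mathbf L_2(\mathbb R)$. This yields $T_ma_n\to T_mf$ with $a_n\in\mathscr S_{\mathbb R}$.

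Next I would invoke \eqref{eq:iso} itself to pass to the limit. Applying the isometry to the differences $a_n-a_k\in\mathscr S_{\mathbb R}$ gives $\mathbb E[\langle\omega,a_n-a_k\rangle^2]=\|T_m(a_n-a_k)\|^2$, so by linearity of $s\mapsto\langle\omega,s\rangle$ the sequence $(\langle\omega,a_n\rangle)_n$ is Cauchy in $\mathbf L_2(\Omega,\mu_m)$ precisely because $(T_ma_n)_n$ is Cauchy in $\mathbf L_2(\mathbb R)$. I would then define $\langle\omega,f\rangle$ to be the resulting $\mathbf L_2(\Omega,\mu_m)$-limit; the same isometry shows this limit does not depend on the chosen sequence, since any two admissible sequences have $T_m$-images converging to the common limit $T_mf$, whence their difference tends to zero in $\mathbf L_2(\Omega,\mu_m)$. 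Finally, continuity of the norm on both sides yields $\mathbb E[\langle\omega,f\rangle^2]=\lim_n\mathbb E[\langle\omega,a_n\rangle^2]=\lim_n\|T_ma_n\|^2=\|T_mf\|^2$, which is the asserted extension.

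The step I expect to be the main obstacle is the reduction to real-valued approximants. Proposition \ref{prop:denseInRange} naturally delivers complex Schwartz functions, but the measure $\mu_m$ and the pairing $\langle\omega,s\rangle$ are defined only for $s\in\mathscr S_{\mathbb R}$, so the argument must genuinely exploit the evenness of $m$ together with the reality of $f$ to guarantee that passing to real parts does not destroy convergence of the $T_m$-images. Once that symmetry computation is in hand, the remainder is a routine completion argument and continuity of the norm.
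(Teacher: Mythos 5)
Your proposal is correct and follows essentially the same route as the paper: approximate via Proposition \ref{prop:denseInRange}, pass to real parts using the evenness and reality of $m$ together with the reality of $f$ (your Fourier-side Hermitian-symmetry computation is exactly the paper's identity $\overline{T_mf}=T_m\overline{f}$ in disguise), then run the Cauchy-sequence completion argument in $\mathbf L_2(\Omega,\mu_m)$. The only cosmetic difference is that the paper packages the conjugation step as the operator identity \eqref{sym} and averages $T_ms_n$ with $T_m\overline{s_n}$, whereas you carry out the equivalent calculation directly on the Fourier transforms.
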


{\bf Proof:} We first note that, for  $f$ in the domain of $T_m$ we have
\begin{equation}
\label{sym}
\overline{T_mf}=T_m\overline{f}.
\end{equation}
Indeed, since $m$ is even and real  we have
\[
\widehat{T_m\overline{f}}=\sqrt{m}(\widehat{f})^\sharp=
(\sqrt{m}\widehat{f})^\sharp=\left(\widehat{T_mf}\right)^\sharp=\widehat{\overline{
T_mf}}.
\]

Let now $f$ be real-valued and in the domain of $T_m$,
and let $(s_n)_{n\in\mathbb N}$
be a sequence of elements in $\mathscr S$ such that
\begin{equation}
\label{lim}
\lim_{n\rightarrow\infty}\|T_ms_n-T_mf\|=0.
\end{equation}
In view of \eqref{sym}, and since
$f$ is real-valued we have
\begin{equation}
\label{lim1}
\lim_{n\rightarrow\infty}\|T_m\overline{s_n}-T_mf\|=0.
\end{equation}
Together with \eqref{lim} this last equation leads to
\begin{equation}
\label{lim11}
\lim_{n\rightarrow\infty}\|T_m({\rm Re}~s_n)-T_mf\|=0.
\end{equation}
In particular $(T_m({\rm Re}~s_n))_{n\in\mathbb N}$ is a
Cauchy sequence in $\mathbf{L_2}(\mathbb R)$.
By \eqref{eq:iso},
$(\langle \w,{\rm Re}~s_n\rangle)_{n\in\mathbb N}$ is a
Cauchy sequence in $\mathcal W_m$. We denote by
$\langle \w, f\rangle$ its limit. It is easily checked that
the limit does not depend on the
given sequence for which \eqref{lim} holds.
\mbox{}\qed\mbox{}\\

We denote by $\mathcal D_{\mathbb R}(T_m)$ the elements in the domain of
$T_m$ which are real-valued.\\

Let $f,g\in\mathcal D_{\mathbb R}(T_m)$. The polarization identity applied to
\begin{equation}
\label{eq:iso111}
\mathbb E[\langle \omega, f\rangle^2]=\|T_mf\|^2_{\mathbf{L_2}(\mathbb R)},
\quad f\in{\mathcal D}_{\mathbb R}(T_m).
\end{equation}
leads to
\[
\mathbb E\left[\langle \omega,f\rangle\langle \omega, g\rangle\right]=
{\rm Re}~\langle T_mf, T_mg\rangle_{\mathbf {L_2}(\mathbb R)}.
\]
In view of  \eqref{sym}, $T_mf$ and $T_mg$ are real and so we have:
\begin{prop}
Let $f,g\in\mathcal D_{\mathbb R}(T_m)$. It holds that
\begin{equation}
\label{Aisometrey}
\mathbb E\left[\langle \omega,f\rangle\langle \omega, g\rangle\right]=
\langle T_mf, T_mg\rangle_{\mathbf {L_2}(\mathbb R)}.
\end{equation}
\end{prop}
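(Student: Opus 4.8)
The plan is to deduce the bilinear identity \eqref{Aisometrey} from the quadratic isometry \eqref{eq:iso111} by polarization, and then to exploit the symmetry relation \eqref{sym} to dispose of a spurious real part. First I would observe that, since $f$ and $g$ are real-valued, the random variables $\langle\omega,f\rangle$ and $\langle\omega,g\rangle$ are real-valued, so $f\mapsto\mathbb E[\langle\omega,f\rangle^2]$ is a genuine nonnegative quadratic form on the real vector space $\mathcal D_{\mathbb R}(T_m)$. Linearity of the pairing in its second argument gives the associated symmetric bilinear form and hence the polarization identity
\[
\mathbb E\left[\langle\omega,f\rangle\langle\omega,g\rangle\right]
=\frac{1}{4}\left(\mathbb E[\langle\omega,f+g\rangle^2]-\mathbb E[\langle\omega,f-g\rangle^2]\right).
\]

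Next I would substitute the isometry \eqref{eq:iso111}, valid on all of $\mathcal D_{\mathbb R}(T_m)$ by the preceding theorem, into each term on the right. Using the linearity of $T_m$, this turns the right-hand side into $\frac{1}{4}\left(\|T_mf+T_mg\|^2-\|T_mf-T_mg\|^2\right)$, which is precisely the real part of the Hermitian inner product of $T_mf$ and $T_mg$ in the complex Hilbert space $\mathbf{L_2}(\mathbb R)$; that is,
\[
\mathbb E\left[\langle\omega,f\rangle\langle\omega,g\rangle\right]
={\rm Re}\,\langle T_mf,T_mg\rangle_{\mathbf{L_2}(\mathbb R)}.
\]

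The remaining step, which I expect to be the only genuinely delicate point, is the removal of the ${\rm Re}$. Because $\mathbf{L_2}(\mathbb R)$ is complex, polarization of a real quadratic form recovers a priori only the real part of the sesquilinear inner product, so one cannot conclude \eqref{Aisometrey} directly. Here \eqref{sym} is decisive: since $f$ and $g$ are real-valued, $\overline{T_mf}=T_m\overline{f}=T_mf$ and likewise $\overline{T_mg}=T_mg$, so $T_mf$ and $T_mg$ are real-valued functions. Consequently $\langle T_mf,T_mg\rangle_{\mathbf{L_2}(\mathbb R)}=\int_{\mathbb R}T_mf\,\overline{T_mg}\,d\xi$ is a real number, equal to its own real part, and \eqref{Aisometrey} follows. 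Everything else is routine; the essential idea is simply that the realness and evenness of $m$ encoded in \eqref{sym} force the transforms to remain real, so no information is lost when passing from the quadratic to the bilinear statement.
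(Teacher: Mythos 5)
Your proposal is correct and follows essentially the same route as the paper: polarization of the quadratic isometry \eqref{eq:iso111} to obtain ${\rm Re}\,\langle T_mf,T_mg\rangle$, followed by the observation via \eqref{sym} that $T_mf$ and $T_mg$ are real-valued, so the real part may be dropped. Your write-up merely makes explicit the steps (the polarization formula and why only the real part is recovered in a complex Hilbert space) that the paper leaves implicit.
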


\begin{prop}
$\left\{
\langle \w,f\rangle
,\, f\in{\mathcal D}_{\mathbb R}(T_m)\right\}$ is a Gaussian process in
the sense that, for every linear
in the sense that for any $f_1,...,f_n \in
{\mathcal D}_{\mathbb R}(T_m) $ and $a_1,...,a_n \in \mathbb R$,
the random variable $\sum_{i=1}^{n}{a_i \langle \w,f_i \rangle}$ has a normal
distribution.
\end{prop}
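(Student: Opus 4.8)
The plan is to build Gaussianity up from the Schwartz functions, where it follows directly from the characteristic functional of Theorem \ref{th:probabilty_measure}, and then transport it along the $\mathbf L_2(\mathbb R)$-approximation used to define $\langle \w, f\rangle$. First I would treat a single Schwartz function: for $s\in\mathscr S_{\mathbb R}$, Theorem \ref{th:probabilty_measure} gives $\mathbb E[e^{i\langle\w,s\rangle}]=e^{-\|T_ms\|^2/2}$, and replacing $s$ by $ts$ (which remains in $\mathscr S_{\mathbb R}$) yields $\mathbb E[e^{it\langle\w,s\rangle}]=e^{-t^2\|T_ms\|^2/2}$ for all $t\in\mathbb R$. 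This is exactly the characteristic function of a centered normal law with variance $\|T_ms\|^2$, so $\langle\w,s\rangle$ is Gaussian. For a finite family $s_1,\dots,s_n\in\mathscr S_{\mathbb R}$ and reals $a_1,\dots,a_n$, the combination $s:=\sum_{i=1}^n a_i s_i$ again lies in $\mathscr S_{\mathbb R}$, and linearity of the duality gives $\sum_{i=1}^n a_i\langle\w,s_i\rangle=\langle\w,s\rangle$, which is normal by the previous sentence. Thus the claim already holds on $\mathscr S_{\mathbb R}$.

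The remaining step is the passage to $\mathcal D_{\mathbb R}(T_m)$ via the approximation defining $\langle\w,f\rangle$. Given $f_1,\dots,f_n\in\mathcal D_{\mathbb R}(T_m)$, I would choose for each $i$ a sequence of real-valued Schwartz functions $r_k^{(i)}=\mathrm{Re}\,s_k^{(i)}$ (as produced in the construction preceding this proposition) with $\|T_m r_k^{(i)}-T_mf_i\|\to0$, so that $\langle\w,r_k^{(i)}\rangle\to\langle\w,f_i\rangle$ in $\mathbf L_2(\mu_m)$ by the isometry \eqref{eq:iso111}. Setting $X_k:=\sum_{i=1}^n a_i\langle\w,r_k^{(i)}\rangle$, the Schwartz case shows each $X_k$ is centered normal, and by \eqref{Aisometrey} its variance equals $\sigma_k^2=\|\sum_{i=1}^n a_i T_m r_k^{(i)}\|^2$. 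Since a finite sum of $\mathbf L_2(\mu_m)$-convergent sequences converges, $X_k\to X:=\sum_{i=1}^n a_i\langle\w,f_i\rangle$ in $\mathbf L_2(\mu_m)$, while continuity of the norm gives $\sigma_k^2\to\sigma^2:=\|\sum_{i=1}^n a_iT_mf_i\|^2$.

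Finally I would close the argument by matching characteristic functions. Convergence of $X_k$ to $X$ in $\mathbf L_2(\mu_m)$ implies convergence in distribution, hence $\mathbb E[e^{itX_k}]\to\mathbb E[e^{itX}]$ for each $t\in\mathbb R$; on the other hand $\mathbb E[e^{itX_k}]=e^{-t^2\sigma_k^2/2}\to e^{-t^2\sigma^2/2}$. Equating the two limits identifies the characteristic function of $X$ as that of $N(0,\sigma^2)$, so $X$ is normal, which is the assertion.

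The only point I expect to require care is this last interchange: one must confirm that the approximating variances $\sigma_k^2$ actually converge (to a possibly zero, but finite, limit), so that the limiting characteristic function is genuinely of the centered Gaussian form rather than degenerate or ill-defined. The stability of $\mathbf L_2$-limits of Gaussians under distributional convergence is standard, and everything else is routine bookkeeping resting on Theorem \ref{th:probabilty_measure} and the isometry \eqref{Aisometrey}.
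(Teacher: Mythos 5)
Your proof is correct, and it is in essence the paper's argument --- compute the characteristic function of $\sum_{i=1}^n a_i\langle\omega,f_i\rangle$ and recognize it as Gaussian --- but with one genuine refinement. The paper's own proof is a one-line computation: it writes $\sum_i a_i\langle\omega,f_i\rangle=\langle\omega,\sum_i a_i f_i\rangle$ and applies the identity \eqref{eq:1} directly to $\lambda\sum_i a_i f_i$, even though \eqref{eq:1} was established only for $s\in\mathscr S_{\mathbb R}$, while the $f_i$ lie in $\mathcal D_{\mathbb R}(T_m)$ and the pairing $\langle\omega,f_i\rangle$ is only \emph{defined} as an $\mathbf L_2(\mu_m)$-limit of $\langle\omega,\mathrm{Re}\,s_k\rangle$. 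Your two-step structure supplies exactly the justification the paper leaves tacit: you prove the claim on $\mathscr S_{\mathbb R}$, where linearity of the duality and \eqref{eq:1} are literally available, and then transport it to $\mathcal D_{\mathbb R}(T_m)$ by choosing real Schwartz approximants $r_k^{(i)}$ with $\|T_mr_k^{(i)}-T_mf_i\|\to0$, using the isometry to get $X_k\to X$ in $\mathbf L_2(\mu_m)$ (hence in distribution), and matching the limits $e^{-t^2\sigma_k^2/2}\to e^{-t^2\sigma^2/2}$ of the characteristic functions. What the paper's version buys is brevity; what yours buys is completeness --- in particular, your argument also establishes in passing that the extended pairing is linear (i.e.\ $\sum_i a_i\langle\omega,f_i\rangle=\langle\omega,\sum_i a_if_i\rangle$ in $\mathbf L_2(\mu_m)$) and that the characteristic-functional identity \eqref{eq:1} extends to all of $\mathcal D_{\mathbb R}(T_m)$, facts the paper uses without comment here and later. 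Your closing caveat about the variances is handled correctly: $\sigma_k^2\to\sigma^2=\|\sum_i a_iT_mf_i\|^2$ by norm continuity, and the possibly degenerate case $\sigma=0$ still yields a (degenerate) normal law, consistent with the statement.
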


\begin{proof}
By (\ref{eq:1}), for $\lambda \in \mathbb R$ we have,
\begin{equation}
\begin{split}
E[e^{i\lambda \sum_{i=1}^{n} {a_i \langle \w,f_i \rangle}}]&=\int_{\Omega}
{e^{i \lambda \sum_{i=1}^{n}{a_i\langle \w,f_i \rangle }}
d\mu_{m}(\omega)}\\
&=
\int_{\Omega} {e^{i \langle\omega, \lambda
\sum_{i=1}^{n}{a_i f_i} \rangle}} d\mu_{m}(\omega)\\
&=
e^{-\frac{1}{2} \lambda^2 \|\sum_{i=1}^{n}{a_iT_m f_i}
\|^2}.
\label{Gaussian}
\end{split}
\end{equation}
\end{proof}

In particular, we have that for any $\xi_1,...,\xi_n \in
\mathcal D_{\mathbb R}\left(T_m\right)$ such that $T_m\xi_1,...,T_m\xi_n$
are orthonormal in
$\mathbf L_2\left(\mathbb R\right)$ and for any $\phi \in \bf{L_2}(\mathbb R^n)$
\begin{equation}\label{Gaussiandistribution}
\mathbb E\left[\phi\left(\langle\omega,\xi_1\rangle,...\langle\omega,\xi_1\rangle
\right) \right]=\frac{1}{(2\pi)^{\frac{n}{2}}}\int_{\mathbb
R^n}{\phi(x_1,...,x_n)\prod^{n}_{i=1}{e^{-\frac{1}{2}{x_i}^2}}dx_1\cdot...\cdot
dx_n }.
\end{equation}

\begin{definition}
We set $\mathcal G$ to be the $\sigma$-field generated by $\left\{
\langle\omega,f\rangle, f \in \mathcal D_{\mathbb R}\left(T_m \right) \right\}$,
and denote $\mathcal W_m\triangleq
\mathbf{L_2}\left(\Omega,\mathcal G,\mu_m\right)$.
\end{definition}
 Note that $\mathcal G$ may be
significantly smaller than $\mathcal B$, the Borel $\sigma$-field of $\Omega$. For
example, if $m\equiv0$, then $T_m$ is the zero operator and
$\mathcal G =\left\{\emptyset,\Omega,0,\Omega
\backslash \{0\} \right\}$. \newline
We will see in the following section that the time
derivative, in the sense of distributions, of the fundamental stochastic
process $B_m$ in the space $\mathcal W_m$ has spectral density $m(\xi)$. It is therefore justify to refer $\mathcal W_m$ as the $m$-{\sl noise space}. \\
In the case $m\left(\xi\right)\equiv1$, $T_m$ is
the identity over $\mathbf{L_2}\left(\mathbb R\right)$ and $\mu_m$ is
the white noise measure used for example in \cite[(1.4), p. 3]{MR1244577}.
Moreover, by Theorem 1.9 p. 7 there, $\mathcal G$ equals the Borel sigma algebra and
so the $1$-noise space coincides with Hida's white noise space.

\section{The process $B_m$}
\setcounter{equation}{0}
\label{sec:3}
We now define a process $B_m\,\,:\,\,\mathbb R\longrightarrow\mathcal W_m$ via
\[
B_m(t)\triangleq B_m(t,\omega)\triangleq \langle\omega,\mathbf 1_t\rangle.
\]
where $\mathbf 1_A$ is the indicator function of the set $A$ and
$\mathbf 1_t \triangleq \mathbf 1_{[0,t]}$.
This process plays the role of the
Brownian motion for the Ito formula in the space $\mathcal W_m$. Note that this is
the same definition as the Brownian motion in \cite{MR1408433}, the difference being the
probability measure assigned to $\left(\Omega,\mathcal
B\right)$.\newline

\begin{theorem} \label{Amotion}
$B_m$ has the following properties:
\begin{enumerate}
\item
$B_m$ is a centered Gaussian random process.
\item
for $t,s \in \mathbb R$, the covariance of $B_m(t)$ and
$B_m(s)$ is
\[
K_m(t,s)=\int_{\mathbb R}\frac{e^{i\xi t}-1}{\xi}\frac{e^{-i\xi s}-1}{\xi}m(\xi)d\xi
=\left( T_m\mathbf 1_t,T_m \mathbf 1_s\right).
\]
\item
The process $B_m$ has a continuous version under the condition
\[
\int_{\mathbb R}\frac{m(\xi)}{1+|\xi|}d\xi<\infty
\]

\end{enumerate}
\end{theorem}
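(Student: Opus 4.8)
The plan is to dispatch the three assertions in turn, with (3) being the substantive one.

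For (1) and (2), I would first note that $\mathbf 1_t$ is real-valued and lies in $\mathcal D(T_m)$ (as recorded in the introduction), hence $\mathbf 1_t\in\mathcal D_{\mathbb R}(T_m)$ and $B_m(t)=\langle\omega,\mathbf 1_t\rangle$ is a well-defined element of $\mathcal W_m$. The Gaussianity in (1) is then immediate from the proposition establishing that $\{\langle\omega,f\rangle:f\in\mathcal D_{\mathbb R}(T_m)\}$ is a Gaussian family, while centering follows by passing to the limit in \eqref{zeroMean} along an approximating sequence $(\mathrm{Re}\,s_n)$ as in the proof of \eqref{eq:iso111}, using that $\mathbf L_2(\Omega)$-convergence preserves the vanishing of the mean. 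For the covariance in (2) I would apply \eqref{Aisometrey} with $f=\mathbf 1_t$, $g=\mathbf 1_s$ to get $\mathrm{Cov}(B_m(t),B_m(s))=\left(T_m\mathbf 1_t,T_m\mathbf 1_s\right)$, and then evaluate this inner product by Plancherel: since $\widehat{\mathbf 1_t}(\xi)$ is a constant multiple of $(1-e^{-i\xi t})/(i\xi)$, the integrand becomes a multiple of $m(\xi)(e^{-i\xi t}-1)(e^{i\xi s}-1)/\xi^2$, and the change of variable $\xi\mapsto-\xi$ together with the evenness of $m$ brings it to the stated form $\int_{\mathbb R}\frac{e^{i\xi t}-1}{\xi}\frac{e^{-i\xi s}-1}{\xi}m(\xi)\,d\xi$.

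The heart of the matter is (3), for which I would invoke Kolmogorov's continuity theorem on each compact interval. Because $B_m(t)-B_m(s)=\langle\omega,\mathbf 1_t-\mathbf 1_s\rangle$ is a centered Gaussian variable, all its moments are governed by its variance: $\mathbb E[|B_m(t)-B_m(s)|^{2k}]=(2k-1)!!\,\big(\mathbb E[|B_m(t)-B_m(s)|^2]\big)^{k}$. Using the representation $K_m(t,s)=r(t)+r(s)-r(t-s)$ from the introduction together with $r(0)=0$, part (2) gives $\mathbb E[|B_m(t)-B_m(s)|^2]=K_m(t,t)-2K_m(t,s)+K_m(s,s)=2r(t-s)$, where $r(\tau)=\int_{\mathbb R}\frac{1-\cos(\tau\xi)}{\xi^2}m(\xi)\,d\xi$. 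Thus everything reduces to the single estimate $r(\tau)\le C|\tau|$ for $|\tau|\le 1$; taking $k=2$ then yields $\mathbb E[|B_m(t)-B_m(s)|^4]\le C'|t-s|^2$, which verifies Kolmogorov's hypothesis with exponent $1$ and furnishes a (indeed H\"older-)continuous version.

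It remains to prove the estimate on $r(\tau)$, and this is where the hypothesis $\int_{\mathbb R}\frac{m(\xi)}{1+|\xi|}\,d\xi<\infty$ enters. I would split the integral at $|\xi|=1/|\tau|$. On $|\xi|\le 1/|\tau|$ I use $1-\cos(\tau\xi)\le(\tau\xi)^2/2$, so that $\frac{1-\cos(\tau\xi)}{\xi^2}\le\frac{\tau^2}{2}$, and bound $\int_{|\xi|\le 1/|\tau|}m(\xi)\,d\xi\le(1+1/|\tau|)\int_{\mathbb R}\frac{m(\xi)}{1+|\xi|}\,d\xi$, since $1+|\xi|\le 1+1/|\tau|$ on this region; this contributes a term of order $|\tau|$ for $|\tau|\le 1$. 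On $|\xi|>1/|\tau|$ I use $1-\cos(\tau\xi)\le 2$ and $\frac{1}{|\xi|}<|\tau|$, together with $\frac{1}{|\xi|}\le\frac{2}{1+|\xi|}$ (valid since $|\xi|\ge 1$ there), to get $\frac{2}{\xi^2}\le\frac{4|\tau|}{1+|\xi|}$, so this piece is likewise $\le 4|\tau|\int_{\mathbb R}\frac{m(\xi)}{1+|\xi|}\,d\xi$. Summing the two contributions gives $r(\tau)\le C|\tau|$. The main obstacle is precisely this spectral estimate: the dyadic cut at $1/|\tau|$ is what converts the weak integrability $\int_{\mathbb R}\frac{m(\xi)}{1+|\xi|}\,d\xi<\infty$ into the H\"older-type decay needed for continuity, and one must balance the two elementary bounds on $1-\cos$ against the weight $1/\xi^2$ so that both regions yield the same power of $|\tau|$.
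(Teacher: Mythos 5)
Your handling of parts (1) and (2) coincides with the paper's: Gaussianity and centering come from the Gaussian-family property \eqref{Gaussian} together with \eqref{zeroMean} (your extra remark that \eqref{zeroMean} must be passed to the limit, since $\mathbf 1_t\notin\mathscr S_{\mathbb R}$, is a point the paper glosses over), and the covariance comes from \eqref{Aisometrey} with $f=\mathbf 1_t$, $g=\mathbf 1_s$ followed by a Plancherel computation (with the same looseness about the $2\pi$ normalization that the paper itself has). For part (3), however, you take a genuinely different route. The paper's proof is a one-line citation: $K_m$ is the special case $d\sigma(\xi)=m(\xi)\,d\xi$ of the covariance $K_\sigma$ of \cite{MR2793121}, and continuity is Theorem 10.1 there. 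You instead give a self-contained argument: the identity $\mathbb E\left[(B_m(t)-B_m(s))^2\right]=2r(t-s)$ (valid since $r(0)=0$), the Gaussian moment identity to pass to fourth moments, and the split of $r(\tau)=\int_{\mathbb R}\frac{1-\cos(\tau\xi)}{\xi^2}\,m(\xi)\,d\xi$ at $|\xi|=1/|\tau|$, using $1-\cos(\tau\xi)\le(\tau\xi)^2/2$ on the inner region and $1-\cos(\tau\xi)\le 2$ together with $1/|\xi|<|\tau|$ and $1/|\xi|\le 2/(1+|\xi|)$ on the outer region. Both estimates are correct, both regions produce $O(|\tau|)$, so $r(\tau)\le C|\tau|$ for $|\tau|\le 1$, hence $\mathbb E\left[(B_m(t)-B_m(s))^4\right]\le C'|t-s|^2$, and Kolmogorov's criterion applies on compact intervals (with the standard gluing to obtain one version on all of $\mathbb R$). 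What your route buys is transparency and a slightly stronger conclusion --- local H\"older continuity of every order $<1/4$, and in fact of every order $<1/2$ if one uses $2k$-th moments and lets $k\to\infty$ --- at the cost of a page of estimates; what the paper's citation buys is brevity and the generality of arbitrary spectral measures $\sigma$ rather than densities $m$.
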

\begin{proof}[Proof:]
(1) follows from (\ref{Gaussian}) and (\ref{zeroMean}).\\
To prove (2), we see that by (\ref{Aisometrey}) we have
\[
\begin{split}
\mathbb E\left[B_m(t)B_m(s)\right]&=\mathbb E\left[\langle\omega,\mathbf 1_t\rangle
\langle\omega,\mathbf 1_s\rangle\right]\\
&={\rm Re}~ \left( T_m \mathbf 1_t,T_m \mathbf 1_s\right) \\
&=\left( T_m \mathbf 1_t,T_m \mathbf 1_s\right),
\end{split}
\]
since this last expression is real.\\
The covariance function $K_m$ is a particular case of the covariance function
$K_\sigma$ presented in \cite{MR2793121} and (3) is a consequence of Theorem 10.1 there.
\end{proof}

We bring here two interesting examples for specific choices of the spectral density $m$ and the resulting process $B_m$.
\begin{Ex}[filtered white noise]
\label{ex:colored_noise}
Consider the spectral density
\[
m_1\left(\xi\right)=\mathbf 1_{[-\Delta,\Delta]},\quad \Delta\geq0.
\]
The corresponding process $B_{m_1}$ has the covariance function
\[
 K_{m_1}(t,s)=\frac{1}{\sqrt{2\pi}}\int_{-\Delta}^{\Delta}\frac{1-\cos(t\xi)-\cos(s\xi)+\cos(\xi(t-s))}{\xi^2}d\xi,\quad t,s \in \mathbb R.
\]
The time derivative of this process also belongs to $\mathcal W_m$, and is a stationary Gaussian process with
covariance
\begin{equation} \label{eq:fractional_colored_noise}
\frac{\partial^2}{\partial t \partial s}K_{m_1}\left(t,s\right)=\frac{1}{\sqrt{2\pi}}\int_{-\Delta}^{\Delta}e^{i(t-s)\xi}d\xi=\frac{2\sin\left(\Delta(t-s) \right)}{t-s}.
\end{equation}
This process can be obtained in physical models by passing a white noise through a low-pass filter with cut-off frequency $\Delta$. We can see from
the covariance function \eqref{eq:fractional_colored_noise} that each time sample $t_0\in \mathbb R$ is positively correlated with time samples in the interval $\left[t_0,t_0+\frac{\pi}{2\Delta}\right)$, negatively correlated with time samples in the interval $\left(t_0+\frac{\pi}{2\Delta},t_0+\frac{3\pi}{2\Delta}\right)$ and so on with decreasing magnitude of correlation. This behavior may describes well price fluctuation of some financial asset.
\end{Ex}

\begin{Ex}[band limited fractional Brownian motion]
We can combine the spectral density $m_1$ from the previous example with the spectral density $|\xi|^{1-2H}$ of the fractional noise with Hurst parameter $H \in \left(0.5,1\right)$ to obtain a process with covariance function
\[
 K_{m_2}(t,s)=\frac{1}{\sqrt{2\pi}}\int_{-\Delta}^{\Delta}\frac{1-\cos(t\xi )-\cos(s\xi)+\cos\left( (t-s)\xi \right)}{\xi^2}|\xi|^{1-2H} ,\quad t,s \in \mathbb R.
\]
This process shares both properties of long range dependency of the fractional Brownian motion with Hurst parameter $H$ and the ripples of the filtered noise for its time derivative. As the bandwidth $\Delta$ approaches infinity, the covariance function $K_{m_2}$ uniformly converges (up to a multiplicative constant) to the covariance of the fractional Brownian motion.
\end{Ex}

Our next goal is to develop stochastic calculus based on the
process $B_m$ in the space $\mathcal W_m$. The definition of the Wiener integral with respect to $B_m$ for
$f \in \mathcal D\left(T_m \right)$ is straightforward and given by
\begin{equation}
 \int_0^{\tau}{f(t)dB_m(t)\triangleq \langle\omega,f \rangle}.
\end{equation}

Note that since
\[
 \int_{\mathbb R} m(\xi)|\widehat{f}(\xi)|^2d\xi =
\leq \sup_{\xi \in \mathbb R}(1+\xi^2)|\widehat{f}(\xi)|^2 \int_{\mathbb R}\frac{m(\xi)}{1+\xi^2}d\xi < \infty,
 \]
a sufficient condition for a function $f\in \mathbf{L_2}\left(\mathbb R\right)$ to be in the domain of $T_m$ is
\[
  \sup_{\xi \in \mathbb R}(1+\xi^2)|\widehat{f}(\xi)|^2\leq \infty.
\]
This is satisfies in particular if $f$ is differentiable with derivative in $\mathbf{L_2}\left(\mathbb R\right)$.

Recall that in the white noise space one defines the It\^{o}-Hitsuda stochastic integral of $X_t$ on the interval $[a,b]$ as
\[
\int_a^b X_t\diamond \dot{B}_m dt
\]
where $\dot{B}_m$ denotes the time derivative of the Brownian motion and $\diamond$ denotes the Wick product.
The chaos decomposition of the white noise space is used in order to define the Wick product
and appropriate spaces of stochastic distributions where $\dot{B}_m$ lives.\\
Chaos decomposition for $\mathcal W_m$ can be obtained by a similar procedure to the one carried in \cite[section 3]{MR1976868,eh} for the fractional Brownian motion. A space of stochastic distributions that contains $\dot{B}_m$ and is closed under the Wick product can similarly be defined.
However, some inconvenience arises when one tries to obtain a chaos decomposition for $\mathcal W_m$
since any basis to yield it  explicitly depends on the spectral density $m$ through $T_m$. Moreover,
the time derivative of the process $B_m$ may already exists as an element of $W_m$
as Example \ref{ex:colored_noise} teaches us. For those reasons we find that an approach based on the
analog of the $S$ transform in our setting is more general and natural since it uses only the
expectation and the Lebesgue integral on the real line.

\section{The $S_m$ transform}
\setcounter{equation}{0}
\label{sec4}
We now define the analog of the $S$ transform in the
space $\mathcal W_m$ and study its properties. Some of the results
can be carried out immediately from properties of the $S$ transform of
the white noise space from \cite{MR1244577} for example,
but others require more attention.
\newline

For $s \in \mathscr S_{\mathbb R}$ we define the analog of the Wick
exponential in the space $\mathcal W_m$:
\[
:e^{\langle\omega,s\rangle}: \triangleq
e^{\langle\omega,s\rangle-\frac{1}{2} \|T_ms \|^2}
\]

\begin{definition}
The $S_m$ {transform} of $\Phi \in \mathcal W_m$
is defined by
\[
(\mathcal S_m\Phi)(s)\triangleq
\int_{\Omega}{:e^{\langle\omega,s\rangle}:\Phi(\omega) d\mu_m(\omega)}=\mathbb E\left[
:e^{\langle \omega,s\rangle}: \Phi(\omega) \right], \quad s\in \mathscr
S_{\mathbb R}.
\]
\end{definition}

The following result is similar to \cite[Theorem 2.2]{MR2046814}.

\begin{theorem}\label{S-surjection}
Let $\Phi,\Psi \in \mathcal W_m$.
If $\left(\mathcal S_m\Phi\right)(s)=\left(\mathcal S_m\Psi\right)(s)$ for
all $s\in\mathscr S_{\mathbb R}$, then $\Phi=\Psi$.
\end{theorem}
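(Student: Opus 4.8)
The plan is to deduce injectivity from the injectivity of the two-sided Laplace transform on finite-dimensional Gaussian space, after a measure-theoretic reduction to finitely many coordinates. By linearity it suffices to show that $\mathcal S_m\Phi\equiv 0$ forces $\Phi=0$; applying this to $\Phi-\Psi$ gives the theorem. Since the factor $e^{-\frac12\|T_ms\|^2}$ is everywhere nonzero, the hypothesis is equivalent to
\[
\mathbb E\!\left[e^{\langle\omega,s\rangle}\,\Phi\right]=0,\qquad s\in\mathscr S_{\mathbb R},
\]
and each $e^{\langle\omega,s\rangle}$ lies in $\mathcal W_m$ because $\langle\omega,s\rangle$ is centered Gaussian with variance $\|T_ms\|^2$, so that $\mathbb E[e^{2\langle\omega,s\rangle}]=e^{2\|T_ms\|^2}<\infty$.

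First I would set up a finite-dimensional approximation. Choose a countable dense subset $\{s_k\}_{k\ge1}$ of $\mathscr S_{\mathbb R}$. Using the isometry \eqref{eq:iso} together with the continuity of $T_m$ on $\mathscr S$, every $\langle\omega,s\rangle$ with $s\in\mathscr S_{\mathbb R}$ is an $\mathcal W_m$-limit of $\langle\omega,s_{k_j}\rangle$, so that $\mathcal G=\sigma\{\langle\omega,s_k\rangle:k\ge1\}$. Setting $\mathcal G_n=\sigma(\langle\omega,s_1\rangle,\dots,\langle\omega,s_n\rangle)$, these increase to $\mathcal G$, and Lévy's upward martingale convergence theorem gives $\Phi_n:=\mathbb E[\Phi\mid\mathcal G_n]\to\Phi$ in $\mathcal W_m$. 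It therefore suffices to prove $\Phi_n=0$ for each $n$.

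For fixed $n$ I would orthonormalize. By \eqref{sym} each $T_ms_i$ is real-valued, so I can pick an orthonormal basis $e_1,\dots,e_d$ of $\operatorname{span}(T_ms_1,\dots,T_ms_n)$ with real coefficients, writing $e_j=T_m\eta_j$ for suitable $\eta_j\in\mathscr S_{\mathbb R}$. By \eqref{Gaussiandistribution} the vector $Y=(\langle\omega,\eta_1\rangle,\dots,\langle\omega,\eta_d\rangle)$ is standard Gaussian, and, since \eqref{Aisometrey} shows that the relevant differences have zero variance, $\mathcal G_n=\sigma(Y)$; hence $\Phi_n=\phi_n(Y)$ for some $\phi_n\in\mathbf L_2(\mathbb R^d,\gamma_d)$. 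Taking $s=\sum_j a_j\eta_j$ in the hypothesis and using that $e^{\langle\omega,s\rangle}$ is $\mathcal G_n$-measurable yields
\[
\int_{\mathbb R^d} e^{a\cdot y}\,\phi_n(y)\,e^{-|y|^2/2}\,dy=0\qquad\text{for every }a\in\mathbb R^d.
\]

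The last and most delicate step is the finite-dimensional injectivity. Writing $g=\phi_n\,e^{-|\cdot|^2/2}$, a Cauchy--Schwarz estimate against the Gaussian weight shows $g\in\mathbf L_1(\mathbb R^d)$ and that $e^{a\cdot y}g\in\mathbf L_1$ for every real $a$, so $G(z)=\int_{\mathbb R^d} e^{z\cdot y}g(y)\,dy$ converges for all $z\in\mathbb C^d$ and defines an entire function vanishing on $\mathbb R^d$. Arguing one variable at a time via the identity theorem gives $G\equiv0$, and restricting to the imaginary axes identifies $G$ with $\widehat g$, whence $g=0$ and $\phi_n=0$ a.e.; thus $\Phi_n=0$, and $n\to\infty$ gives $\Phi=0$. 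I expect the main obstacle to be precisely this step: justifying the integrability that makes $G$ entire, and upgrading ``vanishing Laplace transform on $\mathbb R^d$'' to ``vanishing Fourier transform'', since $\phi_n\in\mathbf L_2(\gamma_d)$ carries no pointwise control. The measure-theoretic reduction is routine but must be stated with care because the Gaussian vectors may be degenerate, which is what the orthonormalization is there to handle.
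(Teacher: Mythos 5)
Your proposal is correct and follows essentially the same route as the paper's proof: reduce by linearity, condition on the finite-dimensional $\sigma$-fields generated by a countable family whose $T_m$-images are orthonormalized, observe that the hypothesis forces the Gaussian-weighted Laplace transform of each conditional density $\phi_n$ to vanish identically, conclude $\phi_n=0$ by Fourier-type injectivity, and let $n\to\infty$ via martingale convergence. The only differences are ones of detail: where the paper appeals to ``properties of the Fourier transform'' (injectivity of convolution with a Gaussian), you justify the same step by extending the Laplace transform to an entire function on $\mathbb C^d$ and using the identity theorem, and you handle the possible degeneracy of the Gaussian vectors more explicitly than the paper does.
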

\begin{proof}[Proof:]
By linearity of the $S_m$ transform, it is enough to
prove
\[
\forall s\in \mathscr S,\quad \left(\mathcal S_m \Phi \right)(s)=0
\Rightarrow \Phi=0.
\]
 Let $\left\{\xi_n\right\}_{n \in \mathbb N}\subset \mathscr S_{\mathbb R}$ be a
countable dense set in $\bf{L_2}(\mathbb R)$ and denote by $\mathcal G_n$ the
$\sigma$-field
generated by $\left\{\langle \omega,\xi_1 \rangle,...,\langle \omega,\xi_n\rangle
\right\}$.
We may choose $\left\{\xi_n\right\}_{n \in \mathbb N}$ such that $\left\{T_m\xi_n\right\}_{n \in \mathbb N}$ are orthonormal. For every
$n \in \mathbb N$, $E\left[\Phi|\mathcal G_n
\right]=\phi_n\left(\langle\omega,\xi_1\rangle,...,
\langle \omega,\xi_n\right\rangle)$ for some measurable function
$\phi_n\, :\, \mathbb R^n \longrightarrow \mathbb R$ such that
\[
\mathbb E\Phi=\idotsint\limits_{\mathbb R^n}{\phi_n(x)
e^{-\frac{1}{2} x'x}dx}< \infty,
\]
where $x^\prime$ denotes the transpose of $x$; see for instance
\cite[Proposition 2.7, p. 7]{MR0264757}. Thus, for $t=(t_1,...,t_n)
\in \mathbb R^n$, using \eqref{Gaussiandistribution} we obtain
\begin{flalign*}
0 &=\int_{\Omega}{:e^{\langle \omega,\sum_{k=1}^{n}{t_k
\xi_k}\rangle}:\Phi(\omega)d\mu_m}=\int_{\Omega}{:e^{\langle \omega,\sum_{k=1}^{n}{t_k
\xi_k}\rangle}:\mathbb E\left[\Phi|\mathcal G_n
\right]d\mu_m}(\w) \\
&= e^{-\frac{1}{2}\sum_{k=1}^{n}{t_k \|T_m\xi_k}\|^2}
\int_{\Omega}{e^{\sum_{k=1}^{n}t_k{\langle\omega,\xi_k}\rangle}
\phi_n\left(\langle\omega,\xi_1\rangle,...,\langle\omega,\xi_n\rangle\right)d\mu_m}(\w)\\
&= e^{-\frac{1}{2}\sum_{k=1}^{n}{t_k \|T_m\xi_k}\|^2}
\frac{1}{(2\pi)^{\frac{n}{2}}}\idotsint\limits_{\mathbb R^n} {e^{\sum_{k=1}^{n}t_k x_k}
\phi_n\left(x_1,...,x_n\right)e^{-\frac{1}{2}\sum_{k=1}^{n}x_k^2}dx_1...dx_n}\\
&=\idotsint\limits_{\mathbb R^n}{
\phi_n\left(x\right)e^{-\frac{1}{2}(x-t)'(x-t)}dx}.
\end{flalign*}
By properties of the Fourier transform, we get that $\phi_n=0$ for all $n\in \mathbb N$. Since $ \bigcup_{{n\in\mathbb N}} {\mathcal G_n}=\mathcal G$ we have $\Phi=0$.
\end{proof}

\begin{definition}
A stochastic exponential is a random variable of the form
\[
e^{\langle\omega,f\rangle}, \quad f \in \mathcal D_{\mathbb R}\left(T_m\right).
\]
We denote by $\mathscr E$ the family of linear combinations of stochastic
exponentials.
\end{definition}

Since $:e^{\langle\omega,f\rangle}:=e^{-\frac{1}{2}\| T_mf \|^2}
e^{\langle\omega,f\rangle}$, the following claim is a direct consequence of Theorem \ref{S-surjection}.

\begin{prop}
\label{cor:1}
$\mathscr E$ is dense in ${\mathcal W_m}$.
\end{prop}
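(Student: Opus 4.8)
The plan is to use the standard Hilbert-space criterion: a linear subspace of $\mathcal W_m$ is dense if and only if its orthogonal complement is trivial. So I would suppose $\Phi\in\mathcal W_m$ satisfies $\Phi\perp\mathscr E$ and aim to conclude $\Phi=0$, which by linearity reduces to showing that $\mathbb E\!\left[\Phi\, e^{\langle\omega,f\rangle}\right]=0$ for every $f\in\mathcal D_{\mathbb R}(T_m)$ forces $\Phi=0$. Before doing anything else I would record that each stochastic exponential genuinely lies in $\mathcal W_m$: the random variable $\langle\omega,f\rangle$ is $\mathcal G$-measurable and, by \eqref{eq:iso111}, centered Gaussian with variance $\|T_mf\|^2$, so $\mathbb E\!\left[e^{2\langle\omega,f\rangle}\right]=e^{2\|T_mf\|^2}<\infty$; hence the inner products defining orthogonality make sense.

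The key maneuver is to pass from ordinary exponentials to Wick exponentials via the identity $:e^{\langle\omega,s\rangle}:=e^{-\frac12\|T_ms\|^2}e^{\langle\omega,s\rangle}$ recorded just above the statement. Since $\mathscr S_{\mathbb R}\subset\mathcal D_{\mathbb R}(T_m)$, the hypothesis $\Phi\perp\mathscr E$ gives in particular $\mathbb E\!\left[\Phi\, e^{\langle\omega,s\rangle}\right]=0$ for every $s\in\mathscr S_{\mathbb R}$ (all quantities being real, orthogonality is just vanishing of this expectation). Multiplying by the scalar $e^{-\frac12\|T_ms\|^2}$ yields
\[
(\mathcal S_m\Phi)(s)=\mathbb E\!\left[:e^{\langle\omega,s\rangle}:\Phi(\omega)\right]
=e^{-\frac12\|T_ms\|^2}\,\mathbb E\!\left[\Phi\, e^{\langle\omega,s\rangle}\right]=0,
\quad s\in\mathscr S_{\mathbb R}.
\]
Thus $\mathcal S_m\Phi$ is identically zero on $\mathscr S_{\mathbb R}$.

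The last step invokes Theorem \ref{S-surjection} with $\Psi=0$: since $\mathcal S_m\Phi\equiv 0=\mathcal S_m 0$ on $\mathscr S_{\mathbb R}$, injectivity of the $S_m$-transform forces $\Phi=0$. Therefore $\mathscr E^{\perp}=\{0\}$ and $\mathscr E$ is dense in $\mathcal W_m$. There is no real obstacle here, as the substance has already been carried by Theorem \ref{S-surjection}; the only point requiring a moment of care is the observation that one needs orthogonality only against the subfamily indexed by $s\in\mathscr S_{\mathbb R}$ (rather than all of $\mathcal D_{\mathbb R}(T_m)$) to trigger the injectivity theorem, together with the elementary check that the stochastic exponentials indeed sit in $\mathcal W_m$ so that the orthogonality hypothesis is meaningful.
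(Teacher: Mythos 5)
Your proof is correct and is exactly the argument the paper intends: the paper declares the proposition a ``direct consequence'' of Theorem \ref{S-surjection} via the scalar identity $:e^{\langle\omega,s\rangle}:\,=e^{-\frac12\|T_ms\|^2}e^{\langle\omega,s\rangle}$, and you have simply spelled out the same steps (orthogonal-complement criterion, restriction to $s\in\mathscr S_{\mathbb R}\subset\mathcal D_{\mathbb R}(T_m)$, injectivity of $\mathcal S_m$), with the added care of verifying that stochastic exponentials lie in $\mathcal W_m$.
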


\begin{definition}
A stochastic polynomial is a random variable of the form
\[
p\left({\langle\omega,f_1\rangle},...,{\langle\omega,f_2\rangle}\right), \quad
f_1,...,f_n \in \mathcal D_{\mathbb R}\left(T_m\right).
\]
for some polynomial $p$ in $n$ variables.
We denote the set of stochastic polynomials by $\mathscr P$.
\end{definition}

\begin{corollary} \label{cor:2}
The set of stochastic polynomials is dense in ${\mathcal W_m}$.
\end{corollary}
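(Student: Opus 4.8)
The plan is to leverage the density of $\mathscr E$ already established in Proposition \ref{cor:1}. Since $\mathscr P$ is a linear subspace of $\mathcal W_m$ (a finite sum of stochastic polynomials, even in disjoint sets of variables, is again a stochastic polynomial), its closure $\overline{\mathscr P}$ is a closed subspace. Hence it suffices to show that every stochastic exponential $e^{\langle\omega,f\rangle}$, with $f\in\mathcal D_{\mathbb R}(T_m)$, lies in $\overline{\mathscr P}$: the density of $\mathscr E$ then forces $\mathcal W_m=\overline{\mathscr E}\subseteq\overline{\mathscr P}\subseteq\mathcal W_m$, and we are done by linearity and closedness.

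To approximate a single exponential I would use the partial sums of its power series. Set $X=\langle\omega,f\rangle$ and $S_N=\sum_{k=0}^{N}X^k/k!$, each of which is manifestly a stochastic polynomial, being a polynomial in the single random variable $\langle\omega,f\rangle$. By the Gaussian property established above together with the isometry \eqref{eq:iso111}, $X$ is a centered normal variable with variance $\|T_mf\|^2<\infty$, so it possesses finite exponential moments of every order; in particular $\mathbb E[e^{2X}]$ and $\mathbb E[e^{-2X}]$ are both finite, and since $e^{2|X|}\le e^{2X}+e^{-2X}$ we conclude $e^{|X|}\in\mathcal W_m$.

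The key step is then to upgrade the evident pointwise convergence $S_N(\omega)\to e^{X(\omega)}$ to convergence in $\mathcal W_m$. For every $N$ one has the bounds $|S_N|\le\sum_{k=0}^{N}|X|^k/k!\le e^{|X|}$ and $|e^X|\le e^{|X|}$, whence $|S_N-e^X|^2\le 4\,e^{2|X|}$, a $\mu_m$-integrable dominating function by the preceding paragraph. Dominated convergence then gives $\|S_N-e^X\|_{\mathcal W_m}\to 0$, so $e^{\langle\omega,f\rangle}\in\overline{\mathscr P}$, as required.

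I expect the only delicate point to be the verification of the $\mathbf L_2$ domination: the whole argument rests on the finiteness of the exponential moments of the Gaussian variable $\langle\omega,f\rangle$, which in turn relies on identifying its variance with $\|T_mf\|^2<\infty$ for $f\in\mathcal D_{\mathbb R}(T_m)$ via the isometry. Once this moment bound is secured, the remainder is a routine dominated-convergence approximation, and the corollary follows immediately from Proposition \ref{cor:1}.
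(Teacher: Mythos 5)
Your proof is correct, but it runs in the opposite direction from the paper's. The paper argues by duality: it takes $\Phi\in\mathcal W_m$ orthogonal to every stochastic polynomial, expands $e^{\langle\omega,f\rangle}$ as a power series inside the expectation $\mathbb E\left[e^{\langle\omega,f\rangle}\Phi\right]$, interchanges sum and expectation via Fubini (justified by Cauchy--Schwarz together with the Gaussian moment growth $\mathbb E\left[\langle\omega,f\rangle^{2n}\right]=(2n-1)!!\,\|T_mf\|^{2n}$), concludes that $\Phi$ is also orthogonal to every stochastic exponential, and then invokes Theorem \ref{S-surjection} to get $\Phi=0$. You instead prove the primal statement: the partial sums $S_N=\sum_{k=0}^N \langle\omega,f\rangle^k/k!$ converge to $e^{\langle\omega,f\rangle}$ in the norm of $\mathcal W_m$, by dominated convergence with the integrable dominating function $4e^{2|\langle\omega,f\rangle|}$ (finite expectation because a centered Gaussian with variance $\|T_mf\|^2$ has finite exponential moments), so that $\mathscr E\subseteq\overline{\mathscr P}$ and density follows from Proposition \ref{cor:1}. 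The two arguments rest on the same analytic fact --- Gaussian integrability makes the exponential series summable against $\mathbf L_2$ --- but package it differently: yours yields explicit norm-convergent polynomial approximants to each exponential (a slightly stronger conclusion than the weak-type convergence the paper needs) and requires the easy observation that $\mathscr P$ is a linear subspace, while the paper's orthogonality route goes straight through the injectivity of the $S_m$-transform without that observation. Note also that since Proposition \ref{cor:1} is itself a consequence of Theorem \ref{S-surjection}, both proofs ultimately rest on the same foundation, so neither is logically more economical; yours is arguably the more constructive of the two.
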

\begin{proof}
We first note that the stochastic polynomials indeed belong to
${\mathcal W_m}$ because the random variables $\langle \w, f\rangle$ are
Gaussian and hence have moments of any order.\\

Let $\Phi \in {\mathcal W_m}$ such that $\mathbb E\left[\Phi p \right]=0$ for
each $p\in \mathscr P$. Then any $f \in \mathcal D_{\mathbb R}(T_m)$,
\begin{equation}
E\left[e^{\langle\omega,f\rangle}\Phi(\omega) \right]=E\left[\sum_{n=0}^\infty
{\frac{\langle\omega,f\rangle^n}{n!} }\Phi(\omega) \right]=
\sum_{n=0}^\infty {\frac{\mathbb E\left[\langle\omega,f\rangle^n \Phi(\omega) \right]}
{n!}}=0,
\end{equation}
where interchanging of summation is justified by Fubini's theorem since
\[
\begin{split}
\sum_{n=0}^\infty {\mathbb E\left[\mid \frac{\langle\omega,f\rangle^n}{n!}
\Phi(\omega) \mid \right]}& \leq
\sum_{n=0}^\infty {\frac{1}{(n!)}\sqrt{ \mathbb E\left[ \langle\omega,f
\rangle^{2n}\right] \mathbb E\left[ {\Phi(\omega)}^2 \right]}}\\
&\leq \sum_{n=0}^\infty \sqrt{\frac{\left(2n-1\right)!!}{(n!)^2}}
{\parallel T_mf \parallel}_{\mathbf L_2\left(\mathbb R\right)}^{n}
\sqrt{ \mathbb E\left[ {\Phi(\omega)}^2 \right]} \\
&\leq \sum_{n=0}^\infty {\frac{2^n}{n!} {\parallel T_mf \parallel
}_{\mathbf L_2\left(\mathbb R\right)}^{n}
\sqrt{ \mathbb E\left[ {\Phi(\omega)}^2 \right]}}\\
&= e^{2{\parallel T_mf \parallel}_{\mathbf L_2\left(\mathbb R\right)}^2 } \cdot \sqrt{ \mathbb E\left[
{\Phi(\omega)}^2 \right]} <\infty.
\end{split}
\]
(We have used the Cauchy-Schwarz inequality and the moments of a Gaussian distribution).\\
We have showed that $\mathbb E\left[e^{\langle\omega,f\rangle}\Phi(\omega) \right]=0$
for any $f \in \mathcal D_{\mathbb R}(T_m)$ so by Theorem \ref{S-surjection}
we obtain $\Phi=0$ in ${\mathcal W_m}$.
\end{proof}

\begin{lem}\label{Sidentity}
Let $f,g\in {\mathcal D}_{\mathbb R}(T_m)$. Then
\[
E[:e^{\langle\omega,f\rangle}:~
:e^{\langle\omega,g\rangle}:]=e^{\langle T_mf,T_mg\rangle_{\mathbf L_2\left(\mathbb R\right)}}.
\]
\end{lem}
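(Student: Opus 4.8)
The plan is to reduce the whole computation to the moment generating function of a single centered Gaussian variable, since a product of two Wick exponentials is, up to deterministic factors, an ordinary exponential of a Gaussian. Concretely, I would first unfold the two Wick exponentials via the definition $:e^{\langle\omega,f\rangle}:=e^{\langle\omega,f\rangle-\frac12\|T_mf\|^2}$, pull the deterministic prefactors outside the expectation, and use linearity of $\omega\mapsto\langle\omega,\cdot\rangle$ together with the fact that $\mathcal D_{\mathbb R}(T_m)$ is a real vector space (so $f+g\in\mathcal D_{\mathbb R}(T_m)$) to obtain
\[
\mathbb E\left[:e^{\langle\omega,f\rangle}:\,:e^{\langle\omega,g\rangle}:\right]
=e^{-\frac12\|T_mf\|^2-\frac12\|T_mg\|^2}\,\mathbb E\left[e^{\langle\omega,f+g\rangle}\right].
\]
This isolates the single quantity $\mathbb E[e^{\langle\omega,f+g\rangle}]$ that remains to be evaluated.

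Next I would identify $X:=\langle\omega,f+g\rangle$ as a centered normal random variable: taking $f_1=f$, $f_2=g$ and $a_1=a_2=1$ in the Gaussian-process proposition (established through \eqref{Gaussian}) shows $X$ is Gaussian, \eqref{zeroMean} gives $\mathbb E[X]=0$, and the extended isometry \eqref{eq:iso111} gives $\mathbb E[X^2]=\|T_m(f+g)\|^2$. Since the moment generating function of $N(0,\sigma^2)$ is the entire function $\lambda\mapsto e^{\lambda^2\sigma^2/2}$, evaluating at $\lambda=1$ yields $\mathbb E[e^{X}]=e^{\frac12\|T_m(f+g)\|^2}$, with no convergence concern.

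Finally I would expand the variance. Because $m$ is even and real, \eqref{sym} forces $T_mf$ and $T_mg$ to be real-valued, so $\langle T_mf,T_mg\rangle_{\mathbf L_2(\mathbb R)}$ is real and
\[
\|T_m(f+g)\|^2=\|T_mf\|^2+2\langle T_mf,T_mg\rangle_{\mathbf L_2(\mathbb R)}+\|T_mg\|^2.
\]
Substituting this into the first display, the $\frac12\|T_mf\|^2$ and $\frac12\|T_mg\|^2$ terms cancel against the prefactor and leave exactly $e^{\langle T_mf,T_mg\rangle_{\mathbf L_2(\mathbb R)}}$; the cross term can equally be read off \eqref{Aisometrey}.

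The only point beyond bookkeeping is the passage from the characteristic functional, which carries an imaginary exponent $e^{i\langle\omega,s\rangle}$, to the genuine real exponential $\mathbb E[e^{\langle\omega,f+g\rangle}]$. The main obstacle is therefore simply to justify this cleanly: rather than analytically continuing $C_m$ in the variable $s$, I would rely on the fact that $\langle\omega,f+g\rangle$ is an honest Gaussian random variable, whose real moment generating function is the standard entire Gaussian transform evaluated at the point $1$. Granting that one step, the identity follows from the algebra above.
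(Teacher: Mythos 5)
Your proof is correct and is essentially the paper's own argument: both reduce the product of Wick exponentials, via linearity of $\omega\mapsto\langle\omega,\cdot\rangle$, to the moment generating function of the centered Gaussian variable $\langle\omega,f+g\rangle$ evaluated at $1$, and then expand the variance $\|T_m(f+g)\|^2$ bilinearly (using that $T_mf,T_mg$ are real) so that only the cross term $e^{\langle T_mf,T_mg\rangle_{\mathbf L_2(\mathbb R)}}$ survives. The only cosmetic difference is that the paper packages the same computation as the intermediate statement $\mathbb E[:e^{\langle\omega,h\rangle}:]=1$ applied to $h=f+g$, whereas you evaluate $\mathbb E[e^{\langle\omega,f+g\rangle}]$ directly.
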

\begin{proof}[Proof:]
\begin{equation}
E[:e^{\langle\omega,f\rangle}:]=e^{-\frac{1}{2}\|T_mf\|^2}E[e^{\langle\omega,f\rangle}]=1,
\end{equation}
since $E[e^{\langle\omega,f\rangle}]$ is the moment generating
function of the Gaussian random variable
$\langle \omega,f\rangle$ with variance
$\|T_mf\|^2$ valued at $1$.\newline
 Thus we get
\[
\mathbb E[:e^{\langle\omega,f\rangle}:~
:e^{\langle\omega,g\rangle}:]=e^{\langle T_mf,T_mg\rangle_{\mathbf L_2\left(\mathbb R\right)}}
\mathbb E[:e^{\langle\omega,f+g\rangle}:]=e^{\langle T_mf,T_mg\rangle_{\mathbf L_2\left(\mathbb R\right)}}.
\]
\end{proof}

The following formula is useful in calculating the $S_m$ transform of a multiplication of two random variables,
and can be easily proved using Lemma \ref{Sidentity}.
\begin{equation} \label{eq:S-multiplication}
\mathcal S_m \left(:e^{\langle \omega,f\rangle}:~:e^{\langle \omega,g\rangle}: \right)=e^{\left( T_ms,T_mf \right)} e^{\left( T_ms,T_mf\right)}e^{\left( T_ms,T_mg\right)}
,\quad f,g\in \mathcal D_{\mathbb R}(T_m).
\end{equation}

\begin{prop} \label{S-convergence}
Let $\left\{\Phi_n\right\}$ be a sequence in ${\mathcal W_m}$ that converges in ${\mathcal W_m}$ to $\Phi$. Then for
any $s\in \mathscr S_{\mathbb R}$ the
sequence of real numbers $\left\{ \mathcal S_m \left(\Phi_n \right)(s)
\right\}$
converges to $\mathcal S_m \left(\Phi \right)(s)$.
\end{prop}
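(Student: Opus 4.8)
The plan is to recognize that for each fixed $s$, the map $\Phi\mapsto(\mathcal S_m\Phi)(s)$ is a bounded linear functional on $\mathcal W_m$, realized as integration against the single fixed random variable $:e^{\langle\omega,s\rangle}:$. Boundedness then reduces the entire claim to one application of the Cauchy--Schwarz inequality, since convergence in $\mathcal W_m$ is convergence in $\mathbf L_2(\Omega,\mathcal G,\mu_m)$.

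The first step is to check that $:e^{\langle\omega,s\rangle}:$ is itself an element of $\mathcal W_m$. Because $s\in\mathscr S_{\mathbb R}\subset\mathcal D_{\mathbb R}(T_m)$, Lemma \ref{Sidentity} applies with $f=g=s$ and gives
\[
\mathbb E\left[\left(:e^{\langle\omega,s\rangle}:\right)^2\right]=e^{\|T_ms\|^2}<\infty,
\]
so $:e^{\langle\omega,s\rangle}:\in\mathcal W_m$ with norm exactly $e^{\frac{1}{2}\|T_ms\|^2}$. This is the only fact about the Wick exponential that the argument needs, and it is handed to us directly by the preceding lemma.

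Next, using the linearity of the $S_m$ transform together with its definition as an expectation, I would write
\[
(\mathcal S_m\Phi_n)(s)-(\mathcal S_m\Phi)(s)=\mathbb E\left[:e^{\langle\omega,s\rangle}:(\Phi_n-\Phi)\right],
\]
and then estimate by Cauchy--Schwarz in $\mathcal W_m$, using the norm computed above:
\[
\left|(\mathcal S_m\Phi_n)(s)-(\mathcal S_m\Phi)(s)\right|\le e^{\frac{1}{2}\|T_ms\|^2}\,\|\Phi_n-\Phi\|_{\mathcal W_m}.
\]
Since $\|\Phi_n-\Phi\|_{\mathcal W_m}\to 0$ by hypothesis while the constant $e^{\frac{1}{2}\|T_ms\|^2}$ is finite and independent of $n$, the right-hand side tends to $0$, which proves the asserted convergence.

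There is essentially no serious obstacle here once the correct viewpoint is adopted: the substantive content is the square-integrability of the Wick exponential, and that is precisely what Lemma \ref{Sidentity} supplies. The only point deserving a remark is that if elements of $\mathcal W_m$ are permitted to be complex-valued, one should read the integral as the Hermitian pairing; the same bound still holds verbatim since $:e^{\langle\omega,s\rangle}:$ is real, so conjugation affects only $\Phi_n-\Phi$ and not the estimate.
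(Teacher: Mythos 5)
Your proof is correct and is essentially the paper's own argument: both reduce the claim to a single application of the Cauchy--Schwarz inequality against the fixed Wick exponential $:e^{\langle\omega,s\rangle}:$, whose second moment $e^{\|T_ms\|^2}$ supplies the $n$-independent constant. The only cosmetic difference is that you obtain this moment by invoking Lemma \ref{Sidentity} with $f=g=s$, whereas the paper states it as a direct calculation.
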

\begin{proof}
For any $s \in \mathscr S_{\mathbb R}$,
\[
|\mathcal S_m \Phi_n (s)-\mathcal S_m \Phi (s)|=
|\mathbb E\left[:e^{\langle\omega,s\rangle}:
( \Phi_n-\Phi) \right]|\leq
\sqrt{\mathbb E\left[\left(:e^{\langle\omega,s\rangle}:\right)^2 \right]}\cdot
\sqrt{ \mathbb E\left[\left(\Phi_n-\Phi\right)^2 \right]}.
\]
By direct calculation $\mathbb E\left[\left(:e^{\langle\omega,s\rangle}:\right)^2 \right]=e^{\| T_ms \|^2}$ and since $\mathbb E\left[\left(\Phi_n-\Phi\right)^2
\right] \longrightarrow 0$, the claim follows.
\end{proof}

In the statement of Theorem \ref{translation}
recall that $T_m$ is a bounded operator from $\mathscr S_{\mathbb R}$ into
$\mathbf L_2(\mathbb R)$ and so its adjoint is a bounded operator
from $\mathbf L_2(\mathbb R)$ into $\mathscr S_{\mathbb R}^\prime$.

\begin{theorem} \label{translation}
For $\Phi \in \mathcal W_m$ and $s\in \mathscr S_{\mathbb R}$,
\[
\mathcal S_m\Phi (s)=\int_\Omega {\Phi(\omega+T_m^*T_ms)d\mu_m(\omega)}.
\]
\end{theorem}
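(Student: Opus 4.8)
The plan is to recognize the identity as a Cameron--Martin (quasi-invariance) statement and to prove it by identifying the law of the translated variable. Write $h=T_m^{*}T_ms$, which belongs to $\mathscr S_{\mathbb R}^{\prime}=\Omega$ because $T_m^{*}$ maps $\mathbf L_2(\mathbb R)$ into $\mathscr S_{\mathbb R}^{\prime}$, and let $\tau_s(\omega)=\omega+h$. This is a measurable bijection of $\Omega$, and since $\langle\tau_s\omega,f\rangle=\langle\omega,f\rangle+\langle h,f\rangle$ it maps the generators of $\mathcal G$ to themselves up to additive constants, hence preserves $\mathcal G$-measurability. With this notation the right-hand side is $\int_\Omega(\Phi\circ\tau_s)\,d\mu_m=\int_\Omega\Phi\,d\big((\tau_s)_{*}\mu_m\big)$, so the whole theorem reduces to the single claim that the pushforward $(\tau_s)_{*}\mu_m$ is absolutely continuous with respect to $\mu_m$, with Radon--Nikodym derivative exactly the Wick exponential $:e^{\langle\omega,s\rangle}:$.

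The key step is to verify this density by comparing characteristic functionals and invoking the uniqueness in the Bochner--Minlos theorem (Theorem \ref{th:probabilty_measure}). For $g\in\mathscr S_{\mathbb R}$ a change of variable gives
\[
\int_\Omega e^{i\langle\eta,g\rangle}\,d\big((\tau_s)_{*}\mu_m\big)(\eta)
=e^{i\langle h,g\rangle}\int_\Omega e^{i\langle\omega,g\rangle}\,d\mu_m(\omega)
=e^{\,i\left(T_ms,T_mg\right)-\frac12\|T_mg\|^2},
\]
where $\langle h,g\rangle=\left(T_ms,T_mg\right)$ by the definition of the adjoint. On the other hand, writing $:e^{\langle\omega,s\rangle}:=e^{\langle\omega,s\rangle-\frac12\|T_ms\|^2}$ and using that $\langle\omega,s\rangle$ and $\langle\omega,g\rangle$ are jointly Gaussian (by \eqref{Gaussian}) with the covariance structure of \eqref{Aisometrey}, the Gaussian moment generating function evaluated with one real and one imaginary argument yields
\[
\int_\Omega e^{i\langle\omega,g\rangle}\,:e^{\langle\omega,s\rangle}:\,d\mu_m(\omega)
=e^{-\frac12\|T_ms\|^2}\,e^{\frac12\|T_ms\|^2+i\left(T_ms,T_mg\right)-\frac12\|T_mg\|^2}
=e^{\,i\left(T_ms,T_mg\right)-\frac12\|T_mg\|^2}.
\]
The two functionals coincide for every $g\in\mathscr S_{\mathbb R}$; since $:e^{\langle\omega,s\rangle}:\,d\mu_m$ is a probability measure (its total mass is $\mathbb E[:e^{\langle\omega,s\rangle}:]=1$, as in Lemma \ref{Sidentity}), uniqueness forces $(\tau_s)_{*}\mu_m=\,:e^{\langle\omega,s\rangle}:\,d\mu_m$.

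Granting the density, the change-of-variables formula immediately gives, for every $\Phi\in\mathcal W_m$,
\[
\int_\Omega\Phi(\omega+T_m^{*}T_ms)\,d\mu_m(\omega)=\int_\Omega\Phi(\omega)\,:e^{\langle\omega,s\rangle}:\,d\mu_m(\omega)=\mathcal S_m\Phi(s),
\]
the last equality being the definition of the $S_m$ transform. I would complement this with two remarks. First, a sanity check that also discovers the correct density: on the dense family $\mathscr E$ (Proposition \ref{cor:1}) the formula can be verified by hand, since for $\Phi=\,:e^{\langle\omega,g\rangle}:$ one computes $:e^{\langle\omega+h,g\rangle}:=e^{\left(T_ms,T_mg\right)}:e^{\langle\omega,g\rangle}:$, whose integral is $e^{\left(T_ms,T_mg\right)}$, matching $\mathcal S_m(:e^{\langle\omega,g\rangle}:)(s)$ from Lemma \ref{Sidentity}; the general case then follows by continuity of $\mathcal S_m$ (Proposition \ref{S-convergence}) together with the bound $\mathbb E[(:e^{\langle\omega,s\rangle}:)^2]=e^{\|T_ms\|^2}<\infty$, which makes the right-hand side a bounded functional of $\Phi$ on $\mathcal W_m$. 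Second, the composition $\Phi\circ\tau_s$ is well defined $\mu_m$-almost everywhere even though $\Phi$ is only an equivalence class, because the density $:e^{\langle\omega,s\rangle}:$ is strictly positive and hence $(\tau_s)_{*}\mu_m$ and $\mu_m$ are mutually equivalent. The main obstacle is precisely this quasi-invariance: recognizing that translating by $T_m^{*}T_ms$ keeps us inside the Cameron--Martin directions of $\mu_m$ and producing the explicit density, the delicate point in the computation being the analytic continuation of the Gaussian Laplace transform to the mixed real/imaginary exponent.
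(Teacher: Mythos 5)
Your proof is correct, and it takes a genuinely different route from the paper's. The paper proceeds by approximation: it verifies the formula directly for Wick exponentials $\Phi=\,:e^{\langle\omega,s_1\rangle}:$, $s_1\in\mathscr S_{\mathbb R}$, using the pointwise identity $:e^{\langle\omega+T_m^*T_ms,\,s_1\rangle}:\,=e^{\left(T_ms,T_ms_1\right)}\,:e^{\langle\omega,s_1\rangle}:$ together with Lemma \ref{Sidentity}, and then extends by linearity to $\mathscr E$ and by density (Propositions \ref{cor:1} and \ref{S-convergence}) to all of $\mathcal W_m$ --- essentially your closing ``sanity check'' remark. You instead prove the stronger Cameron--Martin statement that the pushforward of $\mu_m$ under the translation $\tau_s(\omega)=\omega+T_m^*T_ms$ equals $:e^{\langle\omega,s\rangle}:\,d\mu_m$, by computing both characteristic functionals and invoking uniqueness of a probability measure on $\Omega$ with a given characteristic functional; the theorem then follows for every $\Phi\in\mathcal W_m$ in one stroke by change of variables, with no approximation step. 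What your route costs is the mixed real/imaginary Gaussian moment computation (an analytic continuation of the Laplace transform) and the uniqueness-of-measures argument; what it buys is rigor exactly where the paper's proof is thin: to pass from $\mathscr E$ to $\mathcal W_m$ by density one must know that $\Phi\mapsto\int_\Omega\Phi(\omega+T_m^*T_ms)\,d\mu_m(\omega)$ is a continuous (bounded) functional on $\mathcal W_m$, and that $\Phi(\omega+T_m^*T_ms)$ is even well defined when $\Phi$ is only an equivalence class modulo $\mu_m$-null sets; both points are immediate from your density identity (Cauchy--Schwarz with $\mathbb E[(:e^{\langle\omega,s\rangle}:)^2]=e^{\|T_ms\|^2}$, and absolute continuity of $(\tau_s)_*\mu_m$ with respect to $\mu_m$), whereas the paper leaves them implicit. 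One small caveat on your write-up: the claim that $\tau_s$ carries the generators of $\mathcal G$ into themselves up to additive constants is literally valid only for generators indexed by Schwartz functions, since for general $f\in\mathcal D_{\mathbb R}(T_m)$ the pairing $\langle\omega,f\rangle$ is defined only $\mu_m$-almost everywhere as an $\mathbf L_2$-limit; this is harmless --- once quasi-invariance is established, null sets pull back to null sets, so measurability modulo completion follows --- but the argument should be ordered so that the generator statement is made for $s\in\mathscr S_{\mathbb R}$ first and the extension to $\mathcal G$ invoked only after the density is in hand.
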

\begin{proof}

Assume first that $\Phi\left(\omega\right)=:e^{\langle \omega, s_1\rangle}:$
where $s_1 \in \mathscr S_{\mathbb R}$. We have by Lemma \ref{Sidentity} that
\[
\begin{split}
\int_\Omega{ \Phi \left(\omega+T_m^*T_ms\right) d\mu_m\left(\omega\right)}
&=\int_\Omega {:e^{\langle
\omega,s_1 \rangle}:e^{\langle T_m^*T_ms, s_1\rangle}
d\mu_m\left(\omega\right)}\\
&=e^{\langle T_m^*T_ms,s_1\rangle}
\int_\Omega {:e^{\langle \omega,s_1 \rangle}: d\mu_m\left(\omega\right)}\\ &=
 e^{\left( T_ms,T_ms_1\right) }\cdot 1\\&=\mathcal S_m\Phi (s).
\end{split}
\]
The result may be extended by linearity to any $\Phi \in \mathcal E$
which is a dense subset of ${\mathcal W_m}$ by Propositions \ref{cor:1}
and \ref{S-convergence}.
\end{proof}

We can find the $S_m$ transform of powers of $\langle \omega, f\rangle $
for $f\in\mathcal D_{\mathbb R}(T_m)$ by the formula for Hermite polynomials.
\begin{corollary}
For $f\in\mathcal D_{\mathbb R}(T_m)$ and $s\in\mathcal S_{\mathbb R}$,
we have that
\begin{equation}\label{HermiteRelation}
\left( T_ms,T_mf \right)^n=n!
\sum_{m=0}^{\lfloor n/2 \rfloor}\frac{\left(-\frac{1}{2} \right)^m
\left(\mathcal S_m\langle\omega,f \rangle^{n-2m} \right)(s)~
\|T_mf\|^{2m}}{m!(n-2m)!},
\end{equation}

in particular
\begin{equation} \label{WeinerIntegral}
(\mathcal S_m \langle\omega,f\rangle)(s)=\left( T_mf,T_ms \right)
\end{equation}
and
\begin{equation}\label{Ssquare}
(\mathcal S_m \langle\omega,f\rangle^2)(s)=\left( T_mf,T_ms\right)^2+\|T_ms\|^2
\end{equation}
\end{corollary}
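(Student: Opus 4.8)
The plan is to package all the transforms $\mathcal S_m(\langle\omega,f\rangle^n)(s)$, $n\ge 0$, into a single generating function in an auxiliary real parameter $\lambda$, and then read off \eqref{HermiteRelation} by matching two power series in $\lambda$; the formulas \eqref{WeinerIntegral} and \eqref{Ssquare} will drop out as the cases $n=1$ and $n=2$.

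First I would evaluate the $S_m$ transform of a scaled Wick exponential. Fix $f\in\mathcal D_{\mathbb R}(T_m)$; for $\lambda\in\mathbb R$ the function $\lambda f$ again lies in $\mathcal D_{\mathbb R}(T_m)$, so Lemma \ref{Sidentity}, applied with $g=\lambda f$, gives
\[
\mathcal S_m\left(:e^{\lambda\langle\omega,f\rangle}:\right)(s)=\mathbb E\left[:e^{\langle\omega,s\rangle}:~:e^{\langle\omega,\lambda f\rangle}:\right]=e^{\lambda\left(T_ms,T_mf\right)}.
\]
On the other hand, writing $:e^{\lambda\langle\omega,f\rangle}:=e^{-\frac{\lambda^2}{2}\|T_mf\|^2}e^{\lambda\langle\omega,f\rangle}$ and expanding the ordinary exponential in its Taylor series, I would apply $\mathcal S_m$ term by term to obtain
\[
e^{\lambda\left(T_ms,T_mf\right)}=e^{-\frac{\lambda^2}{2}\|T_mf\|^2}\sum_{n=0}^{\infty}\frac{\lambda^n}{n!}\,\mathcal S_m\left(\langle\omega,f\rangle^n\right)(s).
\]

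The interchange of $\mathcal S_m$ with the infinite sum is the step I expect to be the main obstacle, and is where continuity of the transform enters. Since $\langle\omega,f\rangle$ is centered Gaussian with variance $\|T_mf\|^2$, it has finite Gaussian exponential moments, so $e^{\lambda\langle\omega,f\rangle}\in\mathcal W_m$ and the partial sums $\sum_{n=0}^{N}\frac{\lambda^n}{n!}\langle\omega,f\rangle^n$ converge to it in $\mathbf L_2(\Omega,\mathcal G,\mu_m)$. Proposition \ref{S-convergence} then guarantees that $\mathcal S_m$ of the limit equals the limit of $\mathcal S_m$ of the partial sums, which legitimizes the term-by-term passage above.

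Finally I would extract the stated identities. Abbreviating $b=\left(T_ms,T_mf\right)$ and $a=\|T_mf\|^2$, the displayed relation rearranges into the generating-function identity $\sum_{n\ge 0}\frac{\lambda^n}{n!}b^n=e^{\lambda b}=e^{-\frac{\lambda^2}{2}a}\sum_{k\ge 0}\frac{\lambda^k}{k!}\mathcal S_m(\langle\omega,f\rangle^k)(s)$. Expanding $e^{-\frac{\lambda^2}{2}a}=\sum_{j\ge 0}\frac{(-a/2)^j}{j!}\lambda^{2j}$, forming the Cauchy product, and comparing the coefficients of $\lambda^n$ on both sides, I would multiply through by $n!$ to recover \eqref{HermiteRelation}, with the outer summation index $j$ playing the role of $m$ in the statement. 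Specializing to $n=1$ gives \eqref{WeinerIntegral} directly, while $n=2$ gives \eqref{Ssquare} once one recalls from the proof of Lemma \ref{Sidentity} that $\mathcal S_m(1)(s)=\mathbb E[:e^{\langle\omega,s\rangle}:]=1$.
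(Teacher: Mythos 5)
Your proof is correct and follows essentially the same route as the paper: apply Lemma \ref{Sidentity} to the Wick exponential, expand the ordinary exponential term by term (your justification via $\mathbf L_2$ convergence of the partial sums together with Proposition \ref{S-convergence} is just a more explicit version of the paper's appeal to linearity and Fubini), and compare powers of the auxiliary scalar parameter. One remark: your $n=2$ extraction correctly yields $\left(T_ms,T_mf\right)^2+\|T_mf\|^2$, which reveals a typo in the paper's \eqref{Ssquare}, where $\|T_ms\|^2$ should read $\|T_mf\|^2$.
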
\label{Spowers}
\begin{proof}
From Lemma \ref{Sidentity} we get that
\[
(\mathcal S_m:e^{\langle\omega,f\rangle}:)(s)=e^{\left( T_ms,T_mf \right)},
\]
then,
\begin{equation}
e^{-\frac{1}{2}\|T_mf\|^2} \mathcal S_m \left(
\sum_{k=0}^{\infty}{\frac{\langle\omega,f\rangle^k}{k!}}
\right)(s)=\sum_{k=0}^{\infty}{\frac{\left( T_ms,T_mf\right)^k}{k!}}
\end{equation}
By the linearity of the $S_m$ transform and Fubini's
theorem, and replacing $f$ by $tf$ with $t\in\mathbb R$ we compare powers of $t$ at both sides to get
\eqref{HermiteRelation}.
\end{proof}

This last corollary can be also formulated in terms of the Hermite polynomials. Recall that the $n_{th}$ Hermite polynomial with parameter $t \in \mathbb R$ is defied by
\begin{equation}
h_{n}^{\left[t\right]}\left(x\right)\triangleq  n! \sum_{m=0}^{\lfloor n/2 \rfloor}\frac{\left(-\frac{1}{2} \right)^m  x^{n-2m} \cdot t^{2m}}{m!(n-2m)!}
\end{equation}
(see for instance \cite[p. 33]{MR1387829}).
For $f \in {\mathcal D}(T_m)$ we define
\begin{equation}
\tilde{h_n}\left(\langle\omega,f\rangle\right)
\triangleq h_n^{\left[\|T_mf\|\right]}\left(\langle\omega,f\rangle\right)=
  n! \sum_{m=0}^{\lfloor n/2 \rfloor}\frac{\left(-\frac{1}{2} \right)^m
\langle\omega,f \rangle^{n-2m} \cdot \|T_mf\|^{2m}}{m!(n-2m)!},
\label{hntilde}
\end{equation}
and we also set $\tilde{h}_0=1$.\\

So by (\ref{HermiteRelation}) we have that
\begin{equation} \label{Hermite Powers}
\left(\mathcal S_m \tilde{h}_n\left(\langle\omega,f\rangle \right) \right)(s)=\left(T_ms,T_mf \right)^n.
\end{equation}

Using Equation \ref{HermiteRelation} and Lemma \ref{Sidentity},
one can easily verify the following result:

\begin{prop}
Let $f\in {\mathcal D}_{\mathbb R}(T_m)$. It holds that:
\begin{equation}
:e^{\langle\omega,f\rangle}:=\sum_{k=0}^{\infty}{\frac{\tilde{h}_k
\left(\langle\omega,f\rangle \right)}{k!}}
\end{equation}
\end{prop}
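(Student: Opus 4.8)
The plan is to verify the identity inside $\mathcal W_m$ by applying the $S_m$ transform to both sides and invoking its injectivity (Theorem \ref{S-surjection}). Write $\sigma=\|T_mf\|$ and $X=\langle\omega,f\rangle$; by the Gaussianity established above together with \eqref{eq:iso111}, under $\mu_m$ the variable $X$ is centered Gaussian with variance $\sigma^2$ (the degenerate case $\sigma=0$ being immediate). The left-hand side $:e^{\langle\omega,f\rangle}:$ is a constant multiple of a stochastic exponential and hence lies in $\mathcal W_m$, and Lemma \ref{Sidentity} gives $(\mathcal S_m:e^{\langle\omega,f\rangle}:)(s)=e^{(T_ms,T_mf)}$ for every $s\in\mathscr S_{\mathbb R}$.

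First I would show that the series on the right converges in $\mathcal W_m$. Up to the rescaling $h_k^{[\sigma]}(x)=\sigma^k\,\mathrm{He}_k(x/\sigma)$, the polynomials $\tilde h_k(X)=h_k^{[\sigma]}(X)$ are the usual probabilist Hermite polynomials evaluated at the standardized variable $X/\sigma\sim N(0,1)$; the standard orthogonality relation $\mathbb E[\mathrm{He}_j(Z)\mathrm{He}_k(Z)]=\delta_{jk}\,k!$ for $Z\sim N(0,1)$ then yields $\mathbb E[\tilde h_j(X)\tilde h_k(X)]=\delta_{jk}\,k!\,\sigma^{2k}$. Consequently the partial sums $\Phi_N=\sum_{k=0}^N\tilde h_k(X)/k!$ satisfy $\|\Phi_N-\Phi_M\|_{\mathcal W_m}^2=\sum_{k=M+1}^N \sigma^{2k}/k!$, which is a tail of the convergent series $\sum_k\sigma^{2k}/k!=e^{\sigma^2}$. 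Hence $(\Phi_N)$ is Cauchy in $\mathcal W_m$ and converges to some $\Psi\in\mathcal W_m$.

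Next I would compute the transform of $\Psi$ termwise. By \eqref{HermiteRelation} the $S_m$ transform of $\tilde h_k(X)$ is $(\mathcal S_m\tilde h_k(X))(s)=(T_ms,T_mf)^k$, so linearity of $\mathcal S_m$ gives $(\mathcal S_m\Phi_N)(s)=\sum_{k=0}^N (T_ms,T_mf)^k/k!$, which tends to $e^{(T_ms,T_mf)}$ as $N\to\infty$. On the other hand, since $\Phi_N\to\Psi$ in $\mathcal W_m$, Proposition \ref{S-convergence} gives $(\mathcal S_m\Phi_N)(s)\to(\mathcal S_m\Psi)(s)$ for each fixed $s$. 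Comparing the two limits yields $(\mathcal S_m\Psi)(s)=e^{(T_ms,T_mf)}=(\mathcal S_m:e^{\langle\omega,f\rangle}:)(s)$ for all $s\in\mathscr S_{\mathbb R}$, and injectivity of $\mathcal S_m$ forces $\Psi=:e^{\langle\omega,f\rangle}:$, which is the asserted identity.

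The main obstacle is precisely the interchange of the infinite sum with the transform: passing $\mathcal S_m$ through the series is legitimate only after the right-hand side is known to converge in $\mathcal W_m$, so that Proposition \ref{S-convergence} applies to the truncations $\Phi_N$. This is why the orthogonality/$L^2$-Cauchy estimate of the second paragraph is essential and cannot be replaced by a purely formal manipulation of power series; once it is in place, everything reduces to the already established transforms of $\mathcal S_m$ on Wick exponentials (Lemma \ref{Sidentity}) and on the $\tilde h_k$ \eqref{HermiteRelation}.
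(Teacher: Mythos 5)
Your proof is correct and follows exactly the route the paper intends: the paper states the result as an easy consequence of \eqref{HermiteRelation} and Lemma \ref{Sidentity}, i.e., computing the $S_m$ transform of both sides (using $(\mathcal S_m\tilde h_k(\langle\omega,f\rangle))(s)=(T_ms,T_mf)^k$ and $(\mathcal S_m:e^{\langle\omega,f\rangle}:)(s)=e^{(T_ms,T_mf)}$) and invoking injectivity via Theorem \ref{S-surjection}, which is precisely your argument. Your Hermite-orthogonality estimate establishing that the series converges in $\mathcal W_m$ before applying Proposition \ref{S-convergence} supplies the one detail the paper glosses over, and you handle it correctly.
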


It is possible to define a Wick product in $\mathcal W_m$ using
the $S_m$ transform.
\begin{definition}\label{Wick product}
Let $\Phi,\Psi \in \mathcal W_m$.
The Wick product of $\Phi$ and $\Psi$ is the element $\Phi \diamond \Psi
 \in \mathcal W_m$ that satisfies
\[
\mathcal S_T(\Phi \diamond \Psi)(s)=(\mathcal S_T\Phi)(s)(\mathcal
S_T \Psi)(s),\quad \forall s\in\mathscr S_{\mathbb R},
\]
if it exists.
\end{definition}
As this definition suggests, in general the Wick product is
not stable in $\mathcal W_m$.\\

From \eqref{Hermite Powers}, the Wick product of Hermite polynomials satisfies
\[
\tilde{h}_n\left(\langle\omega,f\rangle\right) \diamond \tilde{h}_k\left(\langle\omega,f\rangle\right)
=\tilde{h}_{n+k}\left(\langle\omega,f \rangle\right), \quad n,k \in \mathbb N,\quad f\in{\mathcal D_{\mathbb R}}(T_m).
\]

\section{The stochastic integral}
\label{sec:stoc_int}
\setcounter{equation}{0}
We now use the $S_m$-transform to define a Wick-It\^{o} stochastic integral and
prove an It\^{o} formula for this integral.
In the next section we also show that for particular choice of $m$,
our definition of the stochastic integral coincide with previously defined
Wick-It\^{o} stochastic integrals for fractional Brownian motion; see
\cite{MR1741154,MR2387368}.
We set
\[
\mathcal B_s(t)= \mathcal S_m \left(B_m(t)\right)(s).
\]

By \eqref{WeinerIntegral} we see that
\begin{equation}
\label{S-B_m}
\mathcal B_s(t)
=\left( T_ms,T_m\mathbf 1_t\right)=\int_{\mathbb R}m(\xi)\widehat{s}(\xi)\frac{e^{i\xi t}-1}{\xi}d\xi.
\end{equation}
This function is absolutely continuous with respect to Lebesgue measure and its derivative is
\begin{equation}
\label{eq:mprime}
(\mathcal B_s(t))^\prime=\int_{\mathbb R}m(\xi)\widehat{s}(\xi)e^{i\xi t}d\xi .
\end{equation}
We note that when $T_m$ is a bounded operator from
$\mathbf L_2(\mathbb R)$ into itself we have
by a result of Lebesgue (see \cite[p. 410]{sch_III}), $(\mathcal B_s(t))^\prime=(T_m^*T_ms)(t)$ (a.e.).\\

\begin{definition}\label{def:StochasticIntegral}
Let $M\in\mathbb R$ be a Borel set
and $X:M  \longrightarrow \mathcal W_m$ a
stochastic process. The process $X$ will be called integrable over $M$
if for any $s\in \mathscr S_{\mathbb R}$,
$\left(\mathcal S_m X_t\right)(s) \mathcal B_s(t)^\prime$
is integrable on $M$, and if
there is a $\Phi \in \mathcal W_m$ such that
\[
\mathcal S_m \Phi (s)=\int_M{ \left(\mathcal S_m X_t \right)(s)\mathcal B_s(t)^\prime dt}.
\]
for any $s\in \mathscr S_{\mathbb R}$. If $X$ is integrable, $\Phi$ is uniquely determined by
Theorem \ref{S-surjection} and we denote it by $\int_M{X_t dB_m\left(t\right)}$.
\end{definition}

If $T=Id_{L_2\left(\mathbb R\right)}$, this definition coincides with
the \textit{Hitsuda-Skorohod} integral \cite[Chapter 8]{MR1244577}. See also
Section \ref{sec:stoc_int}.\\

Note that since
\[
|\mathcal B_s(t)'|\leq\int_{\mathbb R}m(\xi)|\widehat{s}(\xi)|d\xi\leq \sup_\xi|(1+\xi^2)\widehat{s}(\xi)|  \int_{\mathbb R}\frac{m(\xi)}{1+\xi^2}d\xi <\infty,
\]
for any $s\in \mathscr S$ there exist a constant $K_s$ such that
\begin{flalign*}
|\int_M \mathcal S_m X_t(s) \mathcal B_s(t)^\prime dt| & \leq K_s \int_M |\mathbb E\left[X_t:e^{\langle\omega,s\rangle}:\right] |dt \\
& \leq K_s \mathbb E\left[:e^{\langle\omega,s\rangle}:\right] \int_M E|X_t| dt.
\end{flalign*}
Thus a sufficient condition for the integrability of $\mathcal S_m X_t(s) \mathcal B_s(t)^\prime$ is
$\int_M E|X_t|dt<\infty$. \newline
We note that general conditions for the integrability of a stochastic process on $\mathcal W_m$ cannot
be easily obtained. Inconvenient conditions for integrability is the price we pay for not relying on
stochastic distributions.

\begin{theorem} \label{th:non-random}
Any non-random $f \in \mathcal D_{\mathbb R}(T_m)$ is integrable and we
have,
\begin{equation}
\int_0^\tau{f(t)dB_m(t)}=\langle\omega,\mathbf 1_{\left[0,\tau \right]}f
\rangle.
\end{equation}
\end{theorem}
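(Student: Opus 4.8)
The plan is to verify directly, from Definition~\ref{def:StochasticIntegral}, that the deterministic process $X_t=f(t)$ is integrable over $[0,\tau]$ and that its integral is $\langle\omega,\mathbf 1_{[0,\tau]}f\rangle$. Since $X_t$ is non-random, the $S_m$-transform of the constant random variable $f(t)$ is $(\mathcal S_m X_t)(s)=f(t)\,\mathbb E[:e^{\langle\omega,s\rangle}:]=f(t)$, the expectation being $1$ as in the proof of Lemma~\ref{Sidentity}. Hence the identity to be established is
\[
\int_0^\tau f(t)\,\mathcal B_s(t)'\,dt=\bigl(\mathcal S_m\langle\omega,\mathbf 1_{[0,\tau]}f\rangle\bigr)(s),\qquad s\in\mathscr S_{\mathbb R}.
\]

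First I would dispose of the integrability requirement in Definition~\ref{def:StochasticIntegral}. Using the bound $|\mathcal B_s(t)'|\le\int_{\mathbb R}m(\xi)|\widehat s(\xi)|\,d\xi=:K_s<\infty$ recorded just before the theorem, together with $\int_0^\tau|f(t)|\,dt\le\sqrt{\tau}\,\|f\|<\infty$ (Cauchy--Schwarz, valid since $f\in\mathbf L_2(\mathbb R)$), the integrand $f(t)\,\mathcal B_s(t)'$ is absolutely integrable on $[0,\tau]$ for every $s\in\mathscr S_{\mathbb R}$.

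The heart of the argument is the evaluation of the left-hand integral. Substituting the expression \eqref{eq:mprime} for $\mathcal B_s(t)'$ and invoking Fubini's theorem --- justified because $\int_0^\tau\!\int_{\mathbb R}|f(t)|\,m(\xi)|\widehat s(\xi)|\,d\xi\,dt\le K_s\sqrt{\tau}\,\|f\|<\infty$ --- I would interchange the $t$- and $\xi$-integrations to reach $\int_{\mathbb R}m(\xi)\widehat s(\xi)\bigl(\int_0^\tau f(t)e^{i\xi t}\,dt\bigr)d\xi$. The inner integral is a constant multiple of the Fourier transform of $\mathbf 1_{[0,\tau]}f$ evaluated at $-\xi$, and since $f$ and $s$ are real-valued the Hermitian symmetry of $\widehat{\mathbf 1_{[0,\tau]}f}$ and $\widehat s$ lets me recast the whole expression, through Parseval's identity, as $(T_m(\mathbf 1_{[0,\tau]}f),T_m s)$. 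By \eqref{WeinerIntegral} this equals $(\mathcal S_m\langle\omega,\mathbf 1_{[0,\tau]}f\rangle)(s)$, which is exactly the desired identity; the uniqueness furnished by Theorem~\ref{S-surjection} then identifies $\int_0^\tau f(t)\,dB_m(t)=\langle\omega,\mathbf 1_{[0,\tau]}f\rangle$.

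The step I expect to be the main obstacle is making sense of the right-hand side in the first place, namely verifying that $\mathbf 1_{[0,\tau]}f\in\mathcal D_{\mathbb R}(T_m)$, so that $\langle\omega,\mathbf 1_{[0,\tau]}f\rangle\in\mathcal W_m$ is defined and \eqref{WeinerIntegral} is legitimately applicable. Concretely this requires $\int_{\mathbb R}m(\xi)\,|\widehat{\mathbf 1_{[0,\tau]}f}(\xi)|^2\,d\xi<\infty$, i.e.\ that multiplication by $\mathbf 1_{[0,\tau]}$ preserves the weighted domain of $T_m$. This is immediate when $T_m$ is bounded on $\mathbf L_2(\mathbb R)$, since then $\mathcal D(T_m)=\mathbf L_2(\mathbb R)$; in general it is precisely here that the structural hypothesis $\int_{\mathbb R}\frac{m(\xi)}{1+\xi^2}\,d\xi<\infty$ must be used, the natural approach being to estimate the $O(1/\xi)$ Fourier tail created by the jumps of $\mathbf 1_{[0,\tau]}f$ at $0$ and $\tau$ against the weight $m$. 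Once this domain membership and the Parseval identification are secured, the remaining bookkeeping is routine.
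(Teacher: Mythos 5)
Your proposal is correct and takes essentially the same route as the paper's own proof: note that $(\mathcal S_m f(t))(s)=f(t)$, compute $\int_0^\tau f(t)\,\mathcal B_s(t)'\,dt$ via \eqref{eq:mprime} and Fubini, recognize the inner $t$-integral as (a constant multiple of) the Fourier transform of $\mathbf 1_{[0,\tau]}f$ so that the whole expression becomes $\left( T_ms,T_m\mathbf 1_{[0,\tau]}f\right)$, and conclude by \eqref{WeinerIntegral} and the injectivity of the $S_m$-transform (Theorem~\ref{S-surjection}); your added justifications of integrability and of Fubini are details the paper leaves implicit. The one point where you diverge --- worrying whether $\mathbf 1_{[0,\tau]}f\in\mathcal D_{\mathbb R}(T_m)$ --- is a legitimate concern, but the paper does not resolve it either: its proof (and indeed the statement itself, since otherwise $\langle\omega,\mathbf 1_{[0,\tau]}f\rangle$ is undefined) tacitly assumes this membership, which later appears as an explicit hypothesis in \eqref{Weiner Integral}.
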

\begin{proof}
In virtue of (\ref{WeinerIntegral}) and the definition of the
stochastic integral, we need to show that
\[
\int_0^\tau f(t) \mathcal B_s(t)^\prime dt=\left( T_ms,T_m\mathbf 1_{
\left[0,\tau \right]} f\right).
\]
Using formula \eqref{eq:mprime} and Fubini's theorem, we have:
\[
\begin{split}
\int_0^\tau f(t) \mathcal B_s(t)^\prime dt&=\int_0^\tau f(t) \left(
\int_{\mathbb R}m(\xi)\widehat{s}(\xi)e^{i\xi t}d\xi\right)dt\\
&=\int_{\mathbb R} m(\xi)\widehat{s}(\xi)\left(\int_0^\tau f(t)e^{it\xi}dt
\right)d\xi\\
&=\int_{\mathbb R} m(\xi)\widehat{s}(\xi)\left(\widehat{
f\mathbf 1_{[0,\tau]}}(\xi)\right)d\xi\\
&=\left( T_ms,T_m\mathbf 1_{
\left[0,\tau \right]} f\right).
\end{split}
\]

\end{proof}

\begin{prop}
The stochastic integral has the following properties:
\begin{enumerate}
\item For $0\leq a<b \in \mathbb R$,
\[
B_m\left(b\right)-B_m\left(a\right)=\int_a^b{dB_m(t)}
\]
\item Let $X:M\longrightarrow \mathcal W_m$ an integrable process. Then
\[
\int_M{X_tdB_m(t)}=\int_{\mathbb R}{\mathbf 1_M  X_t dB_m(t)}.
\]
\item Let $X:M\longrightarrow \mathcal W_m$ an integrable process. Then
\[
\mathbb E\left[\int_M{X_t dB_m(t)} \right]=\mathcal S_m \left(\int_M{X_t dB_m(t)} \right)(0)=0
\].
\item The Wick product and the stochastic integral can be interchanged:
Let $X:\mathbb R \longrightarrow \mathcal W_m$ an integrable process and assume that for $Y\in \mathcal W_m$, $Y \diamond X_t$ is integrable. Then,
\[
Y \diamond \int_{\mathbb R}{X_t dB_T(t)}=\int_{\mathbb R}{Y \diamond
X_t dB_T(t)}
\]
\end{enumerate}
\end{prop}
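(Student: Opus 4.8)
The plan is to prove all four identities by the same mechanism: since the $S_m$-transform is injective on $\mathcal W_m$ (Theorem \ref{S-surjection}), it suffices in each case to compute the $S_m$-transform of both sides and check that they agree for every $s\in\mathscr S_{\mathbb R}$. The existence-and-uniqueness clause built into Definition \ref{def:StochasticIntegral} then delivers the claimed equalities in $\mathcal W_m$.

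For (1) I would take the deterministic integrand $X_t\equiv 1$. Since $:e^{\langle\omega,0\rangle}:$ integrates to $1$ as in the proof of Lemma \ref{Sidentity}, one has $(\mathcal S_m 1)(s)=\mathbb E[:e^{\langle\omega,s\rangle}:]=1$, so by Definition \ref{def:StochasticIntegral} the transform of $\int_a^b dB_m(t)$ is $\int_a^b \mathcal B_s(t)'\,dt=\mathcal B_s(b)-\mathcal B_s(a)$, using the absolute continuity of $\mathcal B_s$ noted after \eqref{S-B_m}. Because $\mathcal B_s(t)=(\mathcal S_m B_m(t))(s)$ and $\mathcal S_m$ is linear, this is exactly $\mathcal S_m\big(B_m(b)-B_m(a)\big)(s)$, which gives (1) after invoking injectivity.

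For (2), the point is that $\mathbf 1_M(t)$ is a scalar for each fixed $t$, so by linearity of $\mathcal S_m$ we have $(\mathcal S_m(\mathbf 1_M X_t))(s)=\mathbf 1_M(t)\,(\mathcal S_m X_t)(s)$; multiplying by $\mathcal B_s(t)'$ and integrating over $\mathbb R$ collapses the domain to $M$, which simultaneously shows $\mathbf 1_M X$ is integrable over $\mathbb R$ and that its integral has the same $S_m$-transform as $\int_M X_t\,dB_m(t)$. For (3) I would use two elementary facts: first $(\mathcal S_m\Phi)(0)=\mathbb E[\Phi]$, since $:e^{\langle\omega,0\rangle}:=1$; and second $\mathcal B_0(t)'\equiv 0$, which is immediate from \eqref{eq:mprime} because $\widehat{0}\equiv 0$. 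Evaluating Definition \ref{def:StochasticIntegral} at $s=0$ then makes the integrand vanish identically, giving $\mathbb E[\int_M X_t\,dB_m(t)]=(\mathcal S_m\int_M X_t\,dB_m(t))(0)=0$.

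The only part requiring genuine care is (4), and there the main obstacle is well-definedness rather than computation: the Wick product is not stable in $\mathcal W_m$, so one must argue that $Y\diamond\int_{\mathbb R}X_t\,dB_m(t)$ actually exists. I would start from the right-hand side, which exists in $\mathcal W_m$ by the integrability hypothesis on $Y\diamond X_t$; call it $\Psi$. Using the defining multiplicativity of the Wick product (Definition \ref{Wick product}) inside the integral, $(\mathcal S_m\Psi)(s)=\int_{\mathbb R}(\mathcal S_m Y)(s)\,(\mathcal S_m X_t)(s)\,\mathcal B_s(t)'\,dt$. Since $(\mathcal S_m Y)(s)$ does not depend on $t$, it factors out of the integral, yielding $(\mathcal S_m\Psi)(s)=(\mathcal S_m Y)(s)\cdot\big(\mathcal S_m\int_{\mathbb R}X_t\,dB_m(t)\big)(s)$. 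This is precisely the product of transforms appearing in Definition \ref{Wick product}, so $\Psi$ is by definition the Wick product $Y\diamond\int_{\mathbb R}X_t\,dB_m(t)$, which establishes both its existence and the identity. I expect the subtlest point to be justifying the factorization of $(\mathcal S_m Y)(s)$ through the Lebesgue integral together with the tacit convergence of all three integrals in sight, but no estimate beyond those already available is required.
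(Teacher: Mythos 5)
Your proposal is correct and takes essentially the same approach as the paper: every identity is verified by computing $S_m$-transforms and invoking the injectivity of $\mathcal S_m$ (Theorem \ref{S-surjection}), and for item (4) you factor the constant $(\mathcal S_m Y)(s)$ through the Lebesgue integral exactly as in the paper's displayed chain of equalities. The only differences are that you supply the (omitted, ``easy'') computations for items (1)--(3) and read the chain in (4) starting from the right-hand side, which has the minor merit of making explicit the existence of the Wick product $Y \diamond \int_{\mathbb R} X_t\, dB_m(t)$, a point the paper passes over silently.
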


\begin{proof}
The proof of the first three items is easy and we omit it. The last item is proved
in the following way:
\begin{flalign*}
\mathcal S_m(Y\diamond \int_{\mathbb R}{X_t dB_m(t)})(s)
 &=(S_mY)(s)\int_M{(\mathcal S_m X_t)(s) dB_T}\\
&= \int_M{(S_mY)(s)(\mathcal S_m X_t)(s) dB_m}\\
 &=\mathcal S_m(\int_{\mathbb R}{Y \diamond X_t dB_m(t)})(s).
\end{flalign*}
\end{proof}

\begin{Ex}For $\tau \geq 0$, we have by equation \eqref{Ssquare},
\[
\begin{split}
\int_0^{\tau}\left(T_ms,T_m\mathbf 1_{t}\right)
\frac{d}{dt}\left(T_ms,T_m\mathbf 1_{t}\right)dt & =
\frac{1}{2}\left(T_ms,T_m\mathbf 1_{\tau}\right)^2 \\
& = \frac{1}{2}\mathcal S_m \left( \langle \omega, \mathbf 1_{t}\rangle-\|T_m\mathbf 1_{t}\|^2 \right)(s).
\end{split}
\]
Then $B_m$ is integrable on the interval $[0,\tau]$, and we have
\[
\int_0^\tau B_m(t)dB_m(t)=\frac{1}{2}B_m(\tau)^2-\frac{1}{2}\|T_m\mathbf 1_{\tau}\|^2.
\]
This reduces to the well known result if $m\left(\xi\right)\equiv 1$ and
 $T_m$ is then the identity operator.
\end{Ex}

\begin{Ex}
Let $\widetilde{h_n}$ be defined by \eqref{hntilde}. A similar argument to the one in \eqref{eq:mprime} will show that any $f$ such that for any $f\mathbf 1_t \in \mathcal D(T_m)$ we have that the function $t\mapsto \left(T_ms,T_m f\mathbf 1_t\right)$ is differentiable with time derivative
\[
\frac{d}{dt}\left(T_ms,T_m f\mathbf 1_t\right)=f(t)\int_{\mathbb R} m(\xi)\widehat{s}(\xi)e^{-it\xi}d\xi=
f(t)\mathcal B_s(t)^\prime.
\]
By a similar argument to Theorem \ref{th:non-random} we have
\begin{flalign*}
\frac{1}{n+1} \mathcal S_m \left(\tilde{h}_{n+1}\left( \langle\omega ,\mathbf 1_\tau f \rangle\right)\left(t\right)\right)(s) & = \frac{1}{n+1} \left( T_ms,T_m \mathbf 1_\tau f\right) \\
& = \int_0^\tau  \left(T_ms,T_m f\mathbf 1_t\right)^n f(t) \mathcal B_s(t)^\prime dt \\
& = \mathcal S_m\left(\int_0^{\tau}f(t) \tilde{h}_n\left( \langle\omega ,\mathbf 1_t f\rangle\right)dB_m\left(t\right) \right)(s).
\end{flalign*}

Thus,
\begin{equation}
\label{eq_hn}
 \int_0^{\tau}f(t) \tilde{h}_n\left( \langle\omega ,\mathbf 1_t f\rangle  \right)dB_m
\left(t\right)=
\frac{1}{n+1}
\tilde{h}_{n+1}\left( \langle\omega ,\mathbf 1_\tau f\rangle  \right).
\end{equation}
\end{Ex}

It follows from \eqref{eq_hn} that for any polynomial $p$ and $f$ with $\mathbf 1_tf\in \mathcal D(T_m)$ the process
$p(\langle\omega ,\mathbf 1_t f\rangle)$ is integrable. This result can be easily extended to the process
\[
t\mapsto e^{\langle \omega, \mathbf 1_t f\rangle}, \quad \mathbf 1_tf\in \mathcal D(T_m),
\]
and we also obtain
\begin{corollary}
\[
\int_0^\tau f(t):e^{\langle \omega,\mathbf 1_t f\rangle }:dB_m(t)= :e^{\langle \omega,\mathbf 1_t f \rangle}:-1.
\]
\end{corollary}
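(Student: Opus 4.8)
The plan is to argue entirely at the level of the $S_m$-transform, following Definition \ref{def:StochasticIntegral}, rather than through a chaos expansion. Write $X_t = f(t):e^{\langle\omega,\mathbf 1_t f\rangle}:$ for the integrand (a process of the type already declared integrable in the paragraph preceding the statement). First I would record its $S_m$-transform: by Lemma \ref{Sidentity}, in the form $(\mathcal S_m :e^{\langle\omega,g\rangle}:)(s)=e^{(T_ms,T_mg)}$, applied with $g=\mathbf 1_t f\in\mathcal D_{\mathbb R}(T_m)$, one obtains
\[
(\mathcal S_m X_t)(s)=f(t)\,e^{(T_ms,\,T_m\mathbf 1_t f)},\qquad s\in\mathscr S_{\mathbb R}.
\]

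The key observation is that, once multiplied by $\mathcal B_s(t)'$, this integrand is an exact derivative in $t$. Setting $G(t)=(T_ms,T_m\mathbf 1_t f)$, the differentiation formula of the preceding Example gives $G'(t)=f(t)\mathcal B_s(t)'$, so that
\[
(\mathcal S_m X_t)(s)\,\mathcal B_s(t)'=e^{G(t)}G'(t)=\frac{d}{dt}\,e^{G(t)}.
\]
Integrating over $[0,\tau]$ by the fundamental theorem of calculus, and using $\mathbf 1_0=0$ so that $G(0)=(T_ms,0)=0$, yields
\[
\int_0^\tau (\mathcal S_m X_t)(s)\,\mathcal B_s(t)'\,dt=e^{G(\tau)}-1=e^{(T_ms,\,T_m\mathbf 1_\tau f)}-1.
\]

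Finally I would identify the right-hand side as an $S_m$-transform. Since $(\mathcal S_m :e^{\langle\omega,\mathbf 1_\tau f\rangle}:)(s)=e^{(T_ms,T_m\mathbf 1_\tau f)}$ and $(\mathcal S_m 1)(s)=\mathbb E[:e^{\langle\omega,s\rangle}:]=1$ (shown in the proof of Lemma \ref{Sidentity}), linearity of $\mathcal S_m$ gives
\[
\int_0^\tau (\mathcal S_m X_t)(s)\,\mathcal B_s(t)'\,dt=\big(\mathcal S_m(:e^{\langle\omega,\mathbf 1_\tau f\rangle}:-1)\big)(s).
\]
Because $:e^{\langle\omega,\mathbf 1_\tau f\rangle}:-1\in\mathcal W_m$, Definition \ref{def:StochasticIntegral} identifies this element as the integral, and the injectivity of $\mathcal S_m$ (Theorem \ref{S-surjection}) gives uniqueness. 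The integrability hypothesis of the definition is checked by the bound $|(\mathcal S_m X_t)(s)\mathcal B_s(t)'|\le |f(t)|\,e^{G(t)}\,|\mathcal B_s(t)'|$, where $|\mathcal B_s(t)'|$ is bounded uniformly in $t$ (as established before Theorem \ref{th:non-random}); on the finite interval $[0,\tau]$ this reduces to local integrability of $t\mapsto |f(t)|e^{G(t)}$, which is immediate from continuity of $G$.

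I do not expect a genuine obstacle: the argument is a one-line antiderivative at the level of the $S_m$-transform. The only point requiring care is the justification of the fundamental theorem of calculus step, i.e. the absolute continuity of $t\mapsto G(t)$ together with the endpoint evaluation at $0$; this is exactly what the differentiation formula of the preceding Example supplies, paralleling the Lebesgue-point remark after \eqref{eq:mprime}. As a cross-check one may instead expand $:e^{\langle\omega,\mathbf 1_t f\rangle}:=\sum_k \tilde h_k(\langle\omega,\mathbf 1_t f\rangle)/k!$ and integrate term by term using \eqref{eq_hn}; since $\frac{1}{k!(k+1)}=\frac{1}{(k+1)!}$, the chaos expansion telescopes back to $:e^{\langle\omega,\mathbf 1_\tau f\rangle}:-1$. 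The only delicate step in that route, interchanging the infinite sum with the stochastic integral, is precisely what the direct $S_m$-transform computation circumvents, which is why I would take the latter as the main proof.
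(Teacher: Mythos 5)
Your proof is correct, but it is not the route the paper takes. The paper obtains this corollary as a consequence of the Hermite--polynomial identity \eqref{eq_hn}: expanding $:e^{\langle\omega,\mathbf 1_t f\rangle}:=\sum_{k\ge 0}\tilde{h}_k\left(\langle\omega,\mathbf 1_t f\rangle\right)/k!$ and integrating term by term, the factor $\frac{1}{k!}\cdot\frac{1}{k+1}=\frac{1}{(k+1)!}$ makes the series telescope to $:e^{\langle\omega,\mathbf 1_\tau f\rangle}:-1$; this is exactly the ``cross-check'' you sketch at the end and then set aside. What you do instead is run the mechanism that proves \eqref{eq_hn} --- the differentiation formula $\frac{d}{dt}\left(T_ms,T_mf\mathbf 1_t\right)=f(t)\mathcal B_s(t)^\prime$ combined with the fundamental theorem of calculus at the level of the $S_m$-transform --- a single time, directly on the exponential, by recognizing $\left(\mathcal S_m X_t\right)(s)\,\mathcal B_s(t)^\prime$ as $\frac{d}{dt}e^{G(t)}$ with $G(t)=\left(T_ms,T_m\mathbf 1_t f\right)$. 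The trade-off is clear: the paper's route gets the corollary ``for free'' once \eqref{eq_hn} is available, but it silently requires interchanging the infinite sum with the stochastic integral (this is what hides behind ``can be easily extended''), whereas your argument needs no interchange at all --- only the closed-form antiderivative, the identification $\left(\mathcal S_m:e^{\langle\omega,g\rangle}:\right)(s)=e^{\left(T_ms,T_mg\right)}$ from Lemma \ref{Sidentity}, injectivity of $\mathcal S_m$ (Theorem \ref{S-surjection}), and the fact that $:e^{\langle\omega,\mathbf 1_\tau f\rangle}:-1$ lies in $\mathcal W_m$. Two small points worth tightening: in the integrability check, boundedness of $e^{G}$ on $[0,\tau]$ is not quite enough --- you should also invoke $f\in\mathcal D(T_m)\subset\mathbf L_2(\mathbb R)$, so that $|f|$ is locally integrable by Cauchy--Schwarz; and note that the right-hand side of the statement contains a typo ($\mathbf 1_t$ should read $\mathbf 1_\tau$), which your proof silently and correctly fixes.
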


\begin{Ex}
Let $f \in \mathcal D_{\mathbb R}(T_m)$. Using \eqref{eq:S-multiplication} we can obtain
\[
\begin{split}
\mathcal S_m \left(:e^{\langle \omega, f\rangle }:\int_0^\tau :e^{\langle \omega,\mathbf 1_t \rangle }:dB_m(t) \right)(s) & = \mathcal S_m \left(:e^{\langle \omega, f\rangle}:~:e^{\langle \omega, \mathbf 1_\tau \rangle}:-:e^{\langle \omega, f\rangle}: \right)(s)\\
& = e^{\left(T_ms,T_mf\right)}\left(e^{\left(T_ms,T_m\mathbf 1_\tau\right)}e^{\left(T_mf,T_m\mathbf 1_\tau\right)}-1\right).
\end{split}
\]
On the other hand,
\[
\begin{split}
\mathcal S_m \left( \int_0^\tau :e^{\langle \omega, f\rangle}:~:e^{\langle \omega,\mathbf 1_t \rangle}: dB_m(t) \right)(s) & =e^{\left(T_ms,T_mf \right)} \int_0^\tau e^{\left(T_ms,T_m\mathbf 1_t \right)} e^{\left(T_mf,T_m\mathbf 1_t \right)} \frac{d}{dt}\left(T_ms,T_m \mathbf 1_t\right)dt \\
&=e^{\left(T_ms,T_mf\right)}\left(e^{\left(T_ms,T_m\mathbf 1_\tau\right)}e^{\left(T_mf,T_m\mathbf 1_\tau\right)}-1\right)\\
& - \int_0^\tau e^{\left(T_ms,T_m \mathbf 1_t \right)}e^{\left(T_mf,T_m \mathbf 1_t \right)}\frac{d}{dt}\left(T_mf,T_m \mathbf 1_t\right)dt.
\end{split}
\]
So in general for an integrable stochastic process $X$ and a random variable $Y$,
\[
Y \int_0^\tau X_tdB_m(t)\neq \int_0^\tau Y X_tdB_m(t).
\]

\end{Ex}

\section{It\^{o}'s formula}
\label{Th:Ito_Formula}
\setcounter{equation}{0}
In this section we prove an It\^{o}'s formula. We begin by proving
an extension of the classical Girsanov theorem to our setting.
\begin{theorem}\label{Girsanov}
Let $f \in\mathcal D(T_m)$, and let $\mu$ be the measure defined by
$\mu(A)=E[:e^{\langle \omega,T_mf \rangle}:\mathbf 1_A]$. The process
\[
\tilde B_m(t) \triangleq B_m(t)-(T_mf,T_m\mathbf 1_t),
\]
is Gaussian and satisfies
\[
\mathbb E_{\mu}[\tilde B_m (t) \tilde B_m(s)]=
(T_m\mathbf 1_t,T_m \mathbf 1_s).
\]
\end{theorem}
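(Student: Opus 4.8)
The plan is to compute the characteristic functional of $\tilde B_m$ under the new measure $\mu$ and read off from it both Gaussianity and the claimed covariance. The key observation is that $\mu$ is obtained from $\mu_m$ by the density $:e^{\langle \omega, T_mf\rangle}:$, which is precisely the Wick exponential, and that expectations against such a density amount to a translation of $\omega$ in the argument of the test functions. This is exactly the content of Theorem \ref{translation}, and it is the engine of the whole argument.

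First I would fix real test functions and study, for a single $g \in \mathcal D_{\mathbb R}(T_m)$, the quantity $\mathbb E_\mu[e^{i\lambda\langle\omega,g\rangle}]$ for $\lambda\in\mathbb R$. By definition of $\mu$ this equals $\mathbb E[:e^{\langle\omega,T_mf\rangle}:\,e^{i\lambda\langle\omega,g\rangle}]$. Writing $e^{i\lambda\langle\omega,g\rangle}$ as a Wick exponential times a normalizing factor (or expanding directly and using Lemma \ref{Sidentity} with the analog of the product identity \eqref{eq:S-multiplication}), I would evaluate this against $\mu_m$ using \eqref{Aisometrey} and the Gaussian moment generating function computed in Lemma \ref{Sidentity}. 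The effect of the density is to shift the mean of $\langle\omega,g\rangle$ by the cross term $\left(T_mf,T_mg\right)$, while leaving the variance $\|T_mg\|^2$ unchanged. Concretely, the computation should yield
\[
\mathbb E_\mu\!\left[e^{i\lambda\langle\omega,g\rangle}\right]
=\exp\!\left\{i\lambda\left(T_mf,T_mg\right)-\tfrac{1}{2}\lambda^2\|T_mg\|^2\right\}.
\]
This already shows $\langle\omega,g\rangle$ is Gaussian under $\mu$ with mean $\left(T_mf,T_mg\right)$ and variance $\|T_mg\|^2$; the same computation with a linear combination $\sum a_i\langle\omega,g_i\rangle$ (as in \eqref{Gaussian}) gives joint Gaussianity, so in particular $B_m(t)=\langle\omega,\mathbf 1_t\rangle$ is Gaussian under $\mu$.

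Next I would specialize to $g=\mathbf 1_t$. The mean of $B_m(t)$ under $\mu$ is then $\left(T_mf,T_m\mathbf 1_t\right)$, which is exactly the deterministic shift subtracted off in the definition of $\tilde B_m(t)$. Hence $\tilde B_m(t)$ is centered under $\mu$, and being an affine function of the Gaussian family it is itself Gaussian. For the covariance I would compute $\mathbb E_\mu[\tilde B_m(t)\tilde B_m(s)]$ by polarizing the variance formula just obtained: since subtracting the $\mu$-mean reduces the second moment to the centered second moment, and the centered Gaussian structure under $\mu$ has the \emph{same} covariance kernel as under $\mu_m$ (the density shifts the mean but not the quadratic form in $\lambda$), the mixed second moment is governed by $\left(T_m\mathbf 1_t,T_m\mathbf 1_s\right)$ via \eqref{Aisometrey}. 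This gives the asserted identity.

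The main obstacle I anticipate is bookkeeping the translation rigorously: Theorem \ref{translation} is stated for $s\in\mathscr S_{\mathbb R}$, whereas here the shift $T_m^*T_mf$ comes from a general $f\in\mathcal D(T_m)$, and the density $:e^{\langle\omega,T_mf\rangle}:$ requires $T_mf$ to be admissible as an argument. I would handle this either by first proving the covariance identity for $f,g\in\mathscr S_{\mathbb R}$, where all the earlier lemmas apply verbatim, and then passing to the limit using the approximation and density results (the isometry extension to $\mathcal D_{\mathbb R}(T_m)$ and Proposition \ref{S-convergence}); the continuity of the inner products $\left(T_mf,T_mg\right)$ in the $\|T_m\cdot\|$ topology makes this limiting step routine once the integrability of the density is checked.
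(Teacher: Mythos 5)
Your proposal is correct and takes essentially the same route as the paper: the paper likewise evaluates an exponential transform under $\mu$ directly (the moment generating function $\mathbb{E}_{\mu}[e^{\lambda \tilde B_m(t)}]$ rather than your characteristic function), combines the Wick-exponential density with the exponent and uses the Gaussian moment generating function under $\mu_m$ to read off that the density shifts the mean by $\left(T_mf,T_m\mathbf 1_t\right)$ while leaving the variance $\|T_m\mathbf 1_t\|^2$ unchanged, then gets joint Gaussianity from linear combinations and the covariance by polarization. Your concluding worry about Theorem \ref{translation} being restricted to $\mathscr S_{\mathbb R}$ is not an issue, since the direct computation (which the paper uses and you also outline as an alternative) needs only the isometry and Gaussianity already established on $\mathcal D_{\mathbb R}(T_m)$, so no approximation step is required.
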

\begin{proof}[Proof:]

We will first prove that for all $t \geq 0$, $\tilde B_m(t)$ is a
Gaussian random variable relative to the measure $\mu$ by considering its
moment generating function $\mathbb E_{\mu}\left[e^{\lambda \tilde B_m(t)}
\right]$, $\lambda \in \mathbb R$,

\begin{equation} \label{Girsanov_1}
\begin{split}
\mathbb E_{\mu}\left[e^{\lambda \tilde B_m(t)}
\right]&=\mathbb E\left[e^{\langle\omega,T_mf\rangle-\frac{1}{2}\|T_mf\|^2}
e^{\lambda \langle\omega,\mathbf 1_t\rangle
-\lambda\left(T_mf,\mathbf 1_t \right)}  \right]\\
&=
e^{-\lambda\left(T_mf,\mathbf 1_t\right)}~
e^{-\frac{1}{2}\|T_mf\|^2}
\mathbb E\left[e^{\langle\omega,T_mf+\lambda\mathbf 1_{t}\rangle} \right].
\end{split}
\end{equation}

Since $ \langle\omega,\phi+\lambda \mathbf 1_t \rangle$ is a zero mean
Gaussian random variable with variance
\[
\|T_m\left(f+\lambda \mathbf 1_t \right)\|^2=\|T_mf\|^2+\lambda^2\|T_m\mathbf 1_t\|^2+2\lambda\left(T_m\phi,T_m\mathbf 1_t\right),
\]
its moment generating function evaluated at $1$ is given by
\begin{equation}
\mathbb E\left[e^{\langle\omega,T_mf+\lambda\mathbf 1_t \rangle} \right]=e^{\frac{1}{2}
\|T_mf\|^2} e^{\frac{1}{2}\lambda^2\|T_m\mathbf 1_t\|^2}
e^{\lambda\left(T_mf,T_m\mathbf 1_t\right)},
\end{equation}
and we conclude from (\ref{Girsanov_1}) that
\begin{equation}
\mathbb E_{\mu}\left[e^{\lambda \tilde B_m(t)}
\right]=e^{\frac{1}{2}\lambda^2\|T_m\mathbf 1_t\|^2}.
\end{equation}

Thus for all $t\geq0$, $\tilde B_m(t)$ is a zero mean Gaussian random
variable on $\left(\Omega,\mathcal G,\mu_m \right)$. Similar arguments
will show that any linear combination of time samples is a Gaussian
variable, and thus $\widetilde B_m(t), t\ge 0$ is a Gaussian process.
Finally, by the polarization formula,
\begin{flalign*}
\mathbb E_{\mu}[\tilde B_m (t) \tilde B_m(s)]=
(T_m
\mathbf 1_{[0,t]},T_m \mathbf 1_{[0,s]}).
\end{flalign*}
\end{proof}

We now interpret integrals of the type $\int_0^\tau \Phi(t)dt$,
where for every $t\in[0,\tau]$, $\Phi(t)\in\mathcal W_m$, as Pettis integrals,
that is as\[
\mathbb E\left[ \left(\int_0^\tau \Phi(t)dt\right) \Psi\right]=
\int_0^\tau \mathbb E[\Phi(t)\Psi] dt,\quad\forall \Psi\in
\mathcal W_m,
\]
under the hypothesis that the function $t\mapsto
\mathbb E[\Phi(t)\Psi]$ belongs to
$\mathbf L_1([0,\tau],dt)$ for every $\Psi\in
\mathcal W_m$. See \cite[pp. 77-78]{Hille_Phillips}.
We note that if $X$ is moreover pathwise integrable and such that the pathwise
intregral belongs to $\mathcal W_m$, then
\[
\int_0^\tau \mathbb E[|X_t|]dt<\infty,
\]
and we can apply Fubini's theorem
to show that both integrals coincide. It is also clear from the definition
of the Pettis integral that it commutes with the $S_m$ transform.
\\

We define the conditions
\begin{eqnarray}
\label{eq1}
\mathbb E\left[ |F(t,X_t)|: e^{\langle \omega, s\rangle}:\right]&<&\infty\\
\label{eq2}
\mathbb E\left[ |\frac{\partial F}{\partial t}(t,X_t)|: e^{\langle \omega,
s\rangle}:\right]&<&\infty\\
\mathbb E\left[ |\frac{\partial F}{\partial x}(t,X_t)|:
e^{\langle \omega, s\rangle}:\right]&<&\infty
\label{eq3}
\end{eqnarray}
for $F\in C^{1,2}\left(\left[0,\infty
\right),\mathbb R\right)$.

We shall now develop an It\^{o} formula for a class of stochastic
processes of the form,
\begin{equation}\label{Weiner Integral}
X_t\left(\omega\right)=\int_0^\tau {f(t)dB_m(t)}=\langle
\omega,\mathbf 1_{[0,\tau]}f\rangle, \quad \tau\geq0,\quad
I_{[0,\tau]}f \in \mathcal D\left(T_m\right).
\end{equation}

\begin{theorem}
Let $F\in C^{1,2}\left(\left[0,\infty
\right),\mathbb R\right)$ satisfying \eqref{eq1}-\eqref{eq3}, and assume
that the function
$\|T_m\mathbf 1_t f|^2$
is absolutely continuous with respect to
the Lebesgue measure as a function of $t$. Then we have,

\begin{flalign} \label{Ito-Formula}
F(\tau,X_\tau)-F(0,0)=\int_0^\tau{\frac{\partial}{\partial
t}F(t,X_t)dt}+\int_0^\tau
{ f(t) \frac{\partial}{\partial x} F\left(t,X_t\right) dB_m \left(t\right)} \\
+\frac{1}{2}\int_0^\tau{\frac{d}{dt}\|T_m\mathbf
1_t f\|^2 \frac{\partial^2}{\partial x^2}
F(t,X_t)dt}  \nonumber
\end{flalign}
in $\mathcal W_m$.
\end{theorem}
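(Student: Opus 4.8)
The plan is to apply the $\mathcal S_m$ transform to both sides of \eqref{Ito-Formula} and invoke its injectivity (Theorem \ref{S-surjection}): it then suffices to check equality of the resulting deterministic functions of $s\in\mathscr S_{\mathbb R}$. On the left I obtain $\bigl(\mathcal S_m F(\tau,X_\tau)\bigr)(s)-F(0,0)$. On the right, the two Lebesgue integrals are Pettis integrals, with which $\mathcal S_m$ commutes, while the stochastic integral term is governed by Definition \ref{def:StochasticIntegral}; hence the right-hand side becomes
\[
\int_0^\tau \bigl(\mathcal S_m \tfrac{\partial F}{\partial t}(t,X_t)\bigr)(s)\,dt
+\int_0^\tau f(t)\,\bigl(\mathcal S_m \tfrac{\partial F}{\partial x}(t,X_t)\bigr)(s)\,\mathcal B_s(t)'\,dt
+\tfrac12\int_0^\tau \tfrac{d}{dt}\|T_m\mathbf 1_t f\|^2\,\bigl(\mathcal S_m \tfrac{\partial^2 F}{\partial x^2}(t,X_t)\bigr)(s)\,dt .
\]
By the fundamental theorem of calculus the whole identity reduces to a pointwise-in-$t$ statement about $G(t,s):=\bigl(\mathcal S_m F(t,X_t)\bigr)(s)$, namely
\[
\frac{d}{dt}G(t,s)=\bigl(\mathcal S_m \tfrac{\partial F}{\partial t}\bigr)(s)+f(t)\,\mathcal B_s(t)'\,\bigl(\mathcal S_m \tfrac{\partial F}{\partial x}\bigr)(s)+\tfrac12 v'(t)\,\bigl(\mathcal S_m \tfrac{\partial^2 F}{\partial x^2}\bigr)(s),
\]
where $v(t):=\|T_m\mathbf 1_t f\|^2$ and all $\mathcal S_m$ transforms are evaluated at $(t,X_t)$.

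The central computation is the evaluation of $G(t,s)$. Using the translation Theorem \ref{translation} I would write
\[
G(t,s)=\int_\Omega F\bigl(t,\langle\omega+T_m^*T_ms,\mathbf 1_t f\rangle\bigr)\,d\mu_m(\omega)=\mathbb E\bigl[F\bigl(t,X_t+b_s(t)\bigr)\bigr],
\]
with $b_s(t):=\bigl(T_ms,T_m\mathbf 1_t f\bigr)$, recalling the earlier computation $b_s'(t)=\tfrac{d}{dt}(T_ms,T_m\mathbf 1_t f)=f(t)\mathcal B_s(t)'$. Since under $\mu_m$ the variable $X_t=\langle\omega,\mathbf 1_t f\rangle$ is centered Gaussian with variance $v(t)$, I would introduce a standard normal $Z$ with $X_t\stackrel{d}{=}\sqrt{v(t)}\,Z$, so that
\[
G(t,s)=\int_{\mathbb R}F\bigl(t,\sqrt{v(t)}\,z+b_s(t)\bigr)\tfrac{1}{\sqrt{2\pi}}e^{-z^2/2}\,dz .
\]
Differentiating in $t$ produces three contributions: the explicit $\partial_t F$ term gives $(\mathcal S_m\partial_t F)(s)$; the $b_s'(t)$ term gives $f(t)\mathcal B_s(t)'(\mathcal S_m\partial_x F)(s)$; and the $\tfrac{v'(t)}{2\sqrt{v(t)}}z$ term, after a Gaussian integration by parts (Stein's identity) that turns $z\,\partial_x F$ into $\sqrt{v(t)}\,\partial_{xx}F$, gives precisely $\tfrac12 v'(t)(\mathcal S_m\partial_{xx}F)(s)$, the factor $\sqrt{v(t)}$ cancelling. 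This is exactly the desired pointwise identity.

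The hard part will be the analytic justification of these manipulations. First, differentiation under the integral defining $G(t,s)$ must be legitimized: this is where the growth hypotheses \eqref{eq1}--\eqref{eq3} enter, dominating $\partial_t F$, $\partial_x F$, $\partial_{xx}F$ against the Wick exponential so that the differentiated integrals converge and $t\mapsto G(t,s)$ is absolutely continuous with the computed derivative, while the assumed absolute continuity of $t\mapsto v(t)$ supplies $v'(t)$ a.e.\ and makes the final $dt$-integral a genuine antiderivative. Second, the reparametrization $X_t\stackrel{d}{=}\sqrt{v(t)}\,Z$ and the Stein integration by parts need care where $v(t)=0$, in particular at $t=0$ where $\mathbf 1_0=0$; the cancellation of $\sqrt{v(t)}$ indicates that the limiting derivative remains well behaved, so I would first establish the identity on the set where $v$ is differentiable and strictly positive and then extend it using the continuity furnished by \eqref{eq1}--\eqref{eq3}. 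Finally I would verify that $f(t)\partial_x F(t,X_t)$ is an integrable process in the sense of Definition \ref{def:StochasticIntegral}, which again follows from \eqref{eq3}, so that the stochastic integral term on the right is well defined and its $\mathcal S_m$ transform is the middle integral above.
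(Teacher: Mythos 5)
Your proposal is correct and takes essentially the same route as the paper's proof: apply $\mathcal S_m$ and its injectivity, represent $\left(\mathcal S_m F(t,X_t)\right)(s)$ as a Gaussian integral with mean $\left(T_m s,T_m\mathbf 1_t f\right)$ and variance $v(t)=\|T_m\mathbf 1_t f\|^2$, differentiate in $t$ under the integral sign using the hypotheses \eqref{eq1}--\eqref{eq3}, turn the variance-derivative term into the $\frac{1}{2}v'(t)\,\partial_{xx}F$ term by Gaussian integration by parts, and integrate back before identifying each term via Definition \ref{def:StochasticIntegral} and the Pettis integral. The only cosmetic differences are that the paper derives the Gaussian representation from its Girsanov theorem (Theorem \ref{Girsanov}) rather than from the translation theorem (Theorem \ref{translation}), and carries out the integration by parts through the heat-equation identity for the kernel $\rho(w,u)$ rather than through Stein's identity in the standard-normal variable $z=u/\sqrt{v(t)}$; these are equivalent formulations of the same two facts.
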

This proof is based on \cite[Section 13.5]{MR1387829}.
\begin{proof}
Let $s\in\mathscr S_{\mathbb R}$ and $f\in{\mathcal D}~(T_m)$.
It follows from Theorem \ref{Girsanov} that for every $t\in[0,\tau]$,
$X_t(\w)=\langle\omega,\mathbf 1_t f \rangle$ is normally distributed under the measure
\[
\mu_s(A) \triangleq \mathbb E\left[
\mathbf 1_A \exp\left\{\langle\omega,s
\rangle-\frac{1}{2}\|T_ms\|^2 \right\} \right]=\mathbb E\left[\mathbf 1_A
:e^{\langle\omega,s \rangle} \right],
\]
with mean
\[
\left(T_ms,T_m\mathbf 1_{[0,t]}f\right)
\]
and variance
\[
\|T_m\mathbf 1_{[0,t]}f\|^2.
\]
Thus,
\begin{flalign} \label{S-Ito}
\left( \mathcal S_m
F(t,X_t)\right)(s)&=\mathbb E\left[:e^{\langle\omega,s\rangle}:F(t,X_t)\right]\\
                &=\int_{\mathbb R}{F\left(t,u+\left( T_m\mathbf 1_{[0,t]}f,{T_m}s
\right)\right)\rho\left(\|{T_m}\mathbf 1_{[0,t]}f
\|^2,u
                \right)du},\nonumber
\end{flalign}
where $\rho(w,u)=\frac{1}{\sqrt{2\pi w}}e^{-\frac{u^2}{2w}}$ and
satisfies,
\begin{equation} \label{rho-Ito}
\frac{\partial}{\partial
w}\rho=\frac{1}{2}\frac{\partial^2}{\partial u^2}\rho.
\end{equation}
Integrating by part we obtain:
\begin{equation} \label{parts-Ito}
\int_{\mathbb R}{F(t,u) \frac{\partial^2}{\partial u^2}\rho(w,u)
du}=\int_{\mathbb R}{\frac{\partial^2}{\partial u^2}F(t,u)\rho(w,u)}du.
\end{equation}
In view of \eqref{eq1}-\eqref{eq3} we may differentiate under the integral sign by
(\ref{S-Ito}),(\ref{rho-Ito}) and (\ref{parts-Ito}) and obtain for
$0\leq t \leq \tau$,
\begin{flalign*}
\frac{d}{dt}\mathcal S_m \left(F(t,X_t) \right)(s) & =\int_{\mathbb
R}{\frac{\partial}{\partial t} F\left(t,u+\left({T_m}\mathbb
\mathbb 1_{[0,t]}f,{T_m}s \right) \right) \rho \left(\|{T_m}\mathbb
\mathbb 1_{[0,t]}f\|^2,u \right)du}  \\
&\hspace{-1.3cm}+ \int_{\mathbb R}{\frac{\partial}{\partial x}
F\left(t,u+\left({T_m}\mathbf 1_{[0,t]}f,{T_m}s \right)
\right)\frac{d}{dt}\left({T_m}\mathbf 1_{[0,t]}f,{T_m}s \right) \rho
\left(\|{T_m}\mathbf 1_{[0,t]}f\|^2,u \right)du} \\
& \hspace{-1.3cm}
+ \int_{\mathbb R}{ F\left(t,u+\left({T_m}\mathbf 1_{[0,t]}f,{T_m}s
\right) \right) \frac{d}{dt}\|{T_m}\mathbf 1_{[0,t]}f\|^2
\frac{\partial}{\partial t} \rho
\left(\|{T_m}\mathbf 1_{[0,t]}f\|^2 \right)du} \\
& =  \mathcal S_m \left(\frac{\partial}{\partial t}
F\left(t,X_t\right) \right)(s)+\frac{d}{dt}\left({T_m}s,{T_m}\mathbb
\mathbf 1_{[0,t]}f \right)\mathcal S_m \left(\frac{\partial}{\partial x}
F\left(t,X_t\right) \right)(s)\\
 &\hspace{5mm} + \frac{1}{2}\frac{d}{dt}
\|{T_m}\mathbf 1_{[0,t]}f\|^2\cdot
\mathcal S_m \left(\frac{\partial^2}{\partial x^2}
F\left(t,X_t\right) \right)(s).
\end{flalign*}
Hence,
\begin{flalign}\label{eq:ItoLastStep}
\mathcal S_m \left(F\left(\tau,X_\tau\right)-F\left(0,0\right)
\right)(s) & =\int_{0}^{\tau}{\mathcal S_m
\left(\frac{\partial}{\partial t}F\left(t,X_t \right)\right)(s)dt} \\ \nonumber
& + \int_{0}^{\tau}{\frac{d}{dt}\left(T_ms,T_m\mathbf 1_{[0,t]}f
\right) \mathcal S_m \left(\frac{\partial}{\partial x}F\left(t,X_t
\right)\right)(s)dt} \\ \nonumber
& +\frac{1}{2} \int_{0}^{\tau}{\frac{d}{dt}\|{T_m}\mathbb
\mathbf 1_{[0,t]}f\|^2 \cdot \mathcal S_m
\left(\frac{\partial^2}{\partial x^2}F\left(t,X_t
\right)\right)(s)dt}.
\end{flalign}
Note that,
\[
\begin{split}
\mathcal S_m\left(\int_0^\tau {f(t) \frac{\partial}{\partial x}F\left(t,X_t
\right) dB_{m}(t)}\right)(s)&=\\
&\hspace{-2cm}=\int_{0}^{\tau}{\frac{d}{dt}\left(T_ms,T_m\mathbf 1_{[0,t]}f
\right)
 \mathcal S_m \left(\frac{\partial}{\partial x}F\left(t,X_t
\right)\right)(s)dt}.
\end{split}
\]
Thus we may use Fubini's theorem to interchange the $S_m$-transform and the pathwise integral, and
obtain that the $S_m$-transform of the right hand side of (\ref{Ito-Formula}) is exactly
the right hand side of (\ref{eq:ItoLastStep}) and the theorem is proved.\end{proof}

\section{Relation with other white-noise extensions of Wick-It\^{o} integral}
\setcounter{equation}{0}
Recall that the white noise space correspond to $m(\xi)\equiv 1$, so denoting it $\mathcal W_1$ is consistent with our notation, and $\mathcal S_1$ is the classical $S$-transform of the white noise space.
We define a map $\widetilde{T_m}: \mathcal W_m \longrightarrow \mathcal W_1$ by describing its action on the dense set of stochastic polynomials in $\mathcal W_m$:
\[
\widetilde{T_m} \langle \omega,f \rangle^n = \langle \omega, T_m f \rangle^n,\quad f\in \mathcal D\left(T_m\right).
\]
Note that since $\mathcal D\left(T_m\right)\subset {\mathbf L_2}\left(\mathbb R\right)$ this map is well defined.
It easy to see that $\widetilde{T_m}$ is an isometry of Hilbert spaces. By continuity we obtain that
\[
\widetilde{T_m}e^{\langle \omega,f\rangle}=e^{\langle \omega T_mf\rangle}, \quad f\in \mathcal D\left(T_m\right),
\]
hence
\[
\left(\mathcal S_1 \widetilde{T_m}{e^{\langle \omega,f\rangle}} \right)(T_m s)=e^{\left(T_ms,T_mf \right)}=\left(\mathcal S_m e^{\langle \omega,f\rangle}\right)(s).
\]
So this relations between the $S_1$ and the $S_m$ transform is extended such that for any $\Phi \in \mathcal W_m$,
\[
\left(\mathcal S_1 \widetilde{T_m}\Phi \right)(T_m s)=\left(\mathcal S_m \Phi\right)(s).
\]
Let $X:\left[0,\tau\right] \longrightarrow \mathcal W_m$
be a stochastic process. We have defined its It\^{o} integral as the unique element $\Phi\in \mathcal W_m$ (if exists) having $S_m$-transform
\[
\left(\mathcal S_m\Phi\right)(s)=\int_0^\tau \left(X_t \right)(s)\frac{d}{dt}\left(T_ms,T_m\mathbf 1_t\right)(s)dt.
\]
This suggests that if we define in the white noise the process $\tilde{B}_m$ as $\langle \omega, T_m \mathbf 1_t \rangle$ and stochastic integral with respect to $\tilde{B}_m$ as the unique element $\Phi \in \mathcal W_1$(if exists) having $S_1$-transform
\begin{equation} \label{def:whiteNoiseIntegral}
\left(\mathcal S_1\Phi\right)(s)=\int_0^\tau \left(X_t \right)(s)\frac{d}{dt}\left(s,T_m\mathbf 1_t)\right)(s)dt,
\end{equation}
both definitions coincides in the sense that
\begin{equation} \label{eq:integralRelation}
\widetilde{T_m}\int_0^\tau X_t dB_m(t)=\int_0^\tau \widetilde{T_m}X_t dB_m(t).
\end{equation}
Recall that the fractional brownian motion can be obtained in our setting by taking $m\left(\xi\right)=\frac{1}{2}|\xi|^{1-2H}$, $H\in(0,1)$, which results in $T_m=M_H$, where $M_H$ defined in \cite{eh}. In the white noise, the fractional Brownian motion can be defined by the continuous version of the process $\left\{\langle \omega,M_H \mathbf 1_t \rangle \right\}_{t\geq0}$. \\
An approach based on the definition described in \eqref{def:whiteNoiseIntegral} for the fractional Brownian motion was given in \cite{MR2046814}. Due to Theorem $3.4$ there, under appropriate conditions our definition of the It\^{o} integral in the case of $T_m=M_H$ coincide in the sense of \eqref{eq:integralRelation} with the Hitsuda-Skorohod integral. Stochastic integration in the white noise setting for the family of stochastic processes considered in this paper can be found in \cite{aal3}, and its equivalence to the integral described here
can be obtained by a similar argument to that of Theorem $3.4$ in \cite{MR2046814}.

\bibliographystyle{plain}


\def\cprime{$'$} \def\lfhook#1{\setbox0=\hbox{#1}{\ooalign{\hidewidth
  \lower1.5ex\hbox{'}\hidewidth\crcr\unhbox0}}} \def\cprime{$'$}
  \def\cprime{$'$} \def\cprime{$'$} \def\cprime{$'$} \def\cprime{$'$}

\end{document}